\documentclass[12pt, twoside]{article}

\usepackage{tikz}
\usetikzlibrary{calc}

\usepackage{amsmath,amsthm}

\usepackage[utf8]{inputenc}

\usepackage{enumerate}
\usepackage{esint}

\usepackage{fancyhdr}
\usepackage{cite}
\usepackage{enumitem}

\usepackage{xargs}                      
\usepackage[colorinlistoftodos,prependcaption,textsize=tiny]{todonotes}
\newcommandx{\pcomment}[2][1=]{\todo[linecolor=red,backgroundcolor=red!25,bordercolor=red,#1]{#2}}
\newcommandx{\kcomment}[2][1=]{\todo[linecolor=blue,backgroundcolor=blue!25,bordercolor=blue,#1]{#2}}

\newif\ifpreprint

\usepackage{hyperref}
\hypersetup{%
    colorlinks=True, 
    citecolor=blue,
    linkcolor=black}

\usepackage[charter]{mathdesign}

\usepackage{accents}

\newcommand{\rom}[1]{\uppercase\expandafter{\romannumeral #1\relax}}

\pagestyle{myheadings}
  \markboth{P. Gangl and K. Sturm}{Topological derivative quasi-linear}
 
  
  \theoremstyle{definition}
  \newtheorem{theorem}{Theorem}[section]
  \newtheorem{corollary}[theorem]{Corollary}
  
  \newtheorem{lemma}[theorem]{Lemma}
  \newtheorem{definition}[theorem]{Definition}
  \newtheorem{remark}[theorem]{Remark}

  \newtheorem*{maintheorem}{Main Theorem}

\newtheorem{thmx}{Assumption}
\newtheorem{assumption}[thmx]{Assumption}



  \newtheorem*{assumption*}{Assumption}

  \newcommand{\fu}{u}
  \newcommand{\fp}{p}
  
  
  
  \numberwithin{equation}{section}
    
  \frenchspacing
  
  \textwidth=167mm
  \textheight=23cm
  \parindent=16pt
  \oddsidemargin=-0.5cm
  \evensidemargin=-0.5cm
  \topmargin=-0.5cm
  
  \newcommand{\subjclass}[1]{\bigskip\noindent\emph{2010 Mathematics Subject Classification:}\enspace#1}
  \newcommand{\keywords}[1]{\noindent\emph{Keywords:}\enspace#1}















\newcommand{\VR}{{\mathbf{R}}}






\newcommand{\Dsf}{\mathsf{D}}


\providecommand{\Ca}{{\cal A}}

\setenumerate[0]{label=(\alph*)}

\newcommand{\eps}{{\varepsilon}}


\newcommand{\overbar}[1]{\mkern 1.5mu\overline{\mkern-1.5mu#1\mkern-1.5mu}\mkern 1.5mu}

\newcommand{\ben}{\begin{equation}}
\newcommand{\een}{\end{equation}}

\newcommand{\benn}{\begin{equation*}}
\newcommand{\eenn}{\end{equation*}}

\usepackage[colorinlistoftodos]{todonotes}

\newcommand{\tow}{\rightharpoonup}

\setlength{\jot}{2pt}

\usepackage{authblk}

\begin{document}

\title{A simplified derivation technique of topological derivatives for quasi-linear transmission problems}

\author{Peter Gangl\footnote{TU Graz, Steyrergasse 30/III, 8010 Graz, Austria, gangl(at)math.tugraz.at } $\;$ and 
        Kevin Sturm\footnote{TU Wien, Wiedner Hauptstr. 8-10,
      1040 Vienna, Austria, E-Mail: kevin.sturm(at)tuwien.ac.at}}

\date{\today}

\maketitle

\begin{abstract}
 
In this paper we perform the rigorous derivation of the topological derivative for optimization problems constrained by a class of quasi-linear elliptic transmission problems. 
In the case of quasi-linear constraints, techniques using fundamental solutions of the differential operators cannot be applied to show convergence of the variation of the states. Some authors succeeded showing this convergence with the help of technical computations under additional requirements on the problem.
Our main objective is to simplify and extend these previous results by using a Lagrangian framework and a projection trick. Besides these generalisations the purpose of this manuscript is to present a systematic derivation approach for topological derivatives.

\subjclass{Primary 49Q10; Secondary 49Qxx,90C46.}

\keywords{topological derivative; quasi-linear problems; topology optimisation; asymptotic analysis;  adjoint approach.}
\end{abstract}

  \maketitle

\section{Introduction}

The topological derivative of a shape functional $J = J(\Omega)$, where $\Omega \subset \VR^d$, measures the sensitivity of the functional with respect to a topological perturbation of the shape $\Omega$. The concept was first used in \cite{EschenauerKobelevSchumacher1994} in the context of linearized elasticity as a means to find optimal locations for introducing holes into an elastic structure. Later, the concept was introduced in a mathematically rigorous way in \cite{SokolowskiZochowski1999}. 
In the literature many research articles deal with the derivation of topological sensitivities of optimization problems which are constrained by linear partial differential equations (PDEs). We refer the reader to \cite{Amstutz2006} as well as the monograph \cite[pp. 3]{b_NOSO_2013a} and references therein. The topological derivative for a class of semilinear PDEs with the Laplace operator as the principal part was studied in \cite{Amstutz2006b, MR2541192}, and more recently in \cite{Sturm2019} using an averaged adjoint framework.

As it is mentioned in the recent book \cite[Sec. 6.4, p.107]{NovotnySokolowskiZochowski2019}, 
\begin{quote}
    ``Extension to nonlinear problems in general can be considered the main challenge
in the theoretical development of the topological derivative method. The difficulty
arises when the nonlinearity comes from the main part of the operator, which at
the same time suffers a topological perturbation.''
\end{quote} 
This statement applies in particular to quasi-linear PDEs when the main part of the differential operator gets topologically perturbed. In this case, techniques based on fundamental solutions, as they are heavily used in the linear and semi-linear case, cannot be applied any more and other strategies have to be followed.

The first rigorous results of topological sensitivity analysis for shape functions constrained by quasi-linear PDEs were obtained in \cite{a_AMBO_2017a} where the authors consider a regularized version of the $p$-Poisson equation. Based on these results, the topological derivative for the quasi-linear equation of 2D magnetostatics was derived in \cite{AmstutzGangl2019} where also the numerical treatment of the obtained formula was addressed.

In this paper, we establish the topological derivative for a  larger  class of quasi-linear problems under  more 
general assumptions. More precisely, given a fixed, open and bounded hold-all domain $\Dsf$ and an open and measurable subset $\Omega \subset \Dsf$, we study the topological sensitivity analysis of the tracking-type cost function
\begin{align} \label{defJ}
    J(\Omega) &=  \int_{\Dsf} | \nabla(u - u_d)|^2 \;  dx
\end{align}
subject to the constraint that $ u\in H^1_0(\Dsf)$ solves
\begin{align}\label{E:weakformulation}
    \int_{\Dsf} \mathcal A_\Omega(x, \nabla u ) \cdot \nabla \varphi \; dx = \int_{\Dsf} f\varphi \;dx \quad \text{ for all } \varphi\in H^1_0(\Dsf).
\end{align}
Here, $f\in L_2(\Dsf)$, $u_d\in H^1_0(\Dsf)$  and $\Ca_\Omega :\Dsf \times \VR^d \rightarrow \VR^d$ is a piecewise nonlinear function defined by
\ben \label{eq_AOmega}
\Ca_\Omega(x,y) := \left\{
  \begin{array}{cl}
    a_1(y) & \text{ for } x\in \Omega\\
a_2(y) & \text{ for } x\in \Dsf\setminus\Omega,
\end{array}\right.
\een
with $a_1,a_2:\VR^d\to \VR^d$ being functions satisfying monotonicity and continuity assumptions. 

The crucial ingredient for our result is the strong convergence (Theorem~\ref{T:Keps_strong_K})
of the variation of the direct states,
\ben\label{E:state_strong}
\nabla \left(\frac{(u_\eps - u_0)\circ T_\eps}{\eps}\right) \to \nabla K \quad \text{ strongly in } L_2(\VR^d)^d,
\een
where $u_\eps$ and $u_0$ correspond to the solutions to the perturbed and unperturbed state equation, respectively.
As shown in \cite{Sturm2019}, for semilinear problems only weak convergence in \eqref{E:state_strong} is necessary to establish the topological derivative. For quasi-linear problems we need the strong convergence \eqref{E:state_strong}. 
 The main contributions of this work are as follows:
\begin{itemize}
    \item simplified analysis for derivation of topological derivative for quasi-linear equations
    \item generalisation of previous results
    \item relaxation of smoothness assumption on inclusion $\omega$
\end{itemize}

%

The presented approach for deriving the topological derivative under a quasi-linear PDE constraint simplifies and also generalizes the approaches presented in \cite{a_AMBO_2017a} and \cite{AmstutzGangl2019} which is the subject of the following discussion.

The main difference between the presented approach and the results obtained in \cite{a_AMBO_2017a, AmstutzGangl2019} lies in the technique used to show \eqref{E:state_strong}, i.e. the strong convergence of the variation of the state on the rescaled bounded domain to the solution $K$ of a transmission problem on the unbounded domain as $\eps \to 0$. While, in our approach, this convergence is accomplished by the introduction of a projection $\hat K_\eps$ of $K$ into the space $H_0^1(\eps^{-1}\Dsf)$, the main ingredient used in \cite{a_AMBO_2017a, AmstutzGangl2019} is a cut-off argument relying on explicit knowledge of the asymptotic behavior of $K$ as $|x| \to \infty$. The authors successfully showed the necessary decay of $K$ by a comparison principle. However, this was achieved using long, technical calculations which additionally required stronger assumptions on the data compared to what is presented here.

In particular, the authors of \cite{a_AMBO_2017a, AmstutzGangl2019} have to assume that $\omega = B_1(0)$ is the unit ball, whereas our approach remains valid for any open and bounded set $\omega \subset \VR^d$ with $0 \in \omega$. Furthermore, the assumptions on the class of quasi-linear PDEs used here, i.e. Assumption \ref{A:nonlinearity}, are less restrictive than those used in \cite{a_AMBO_2017a, AmstutzGangl2019}. In particular, the proof technique used there requires the third derivative of the operator $a_i$ to be bounded (see p.75 in \cite{Bonnafe2013} or Assumption 3.3. in \cite{AmstutzGangl2019}) and, in the case of \cite{AmstutzGangl2019}, an additional nonphysical assumption on the materials which is not necessarily satisfied in practice (see Assumption 3.4 in \cite{AmstutzGangl2019}). We remark that the setting of \cite{AmstutzGangl2019} is fully covered in our analysis without this nonphysical assumption. 

Moreover, in both \cite{a_AMBO_2017a} and \cite{AmstutzGangl2019}, the case $z \in \Omega$ and $z \in \Dsf \setminus \overbar{\Omega}$ have to be treated separately. This is not addressed in \cite{a_AMBO_2017a} where the proof relies on $\gamma_1 < \gamma_0$, and is carried out by repeating adaptations of the technical proofs in the setting of \cite{AmstutzGangl2019}, see also \cite[Sec. 4.5]{GanglDiss2017}. In the approach presented here, it is enough to interchange the roles of $a_1$ and $a_2$ to get to the topological derivative for the other scenario.

Finally, we will also show how to treat objective functionals of the form 
\begin{equation*}
    \int_D |u-u_d|^2 \, \mbox dx,
\end{equation*}
in Section~\ref{sec_averaged_adjoint}, which is not covered by the analysis shown in \cite{a_AMBO_2017a} and \cite{AmstutzGangl2019}.

\text{}\newline
The rest of this paper is organized as follows: In Section~\ref{sec_assumpMainRes} we state the main assumptions and the main result. The remaining sections are devoted to the proof of this result. In Section~\ref{sec_adjFramework}, we recall and extend results from an abstract Lagrangian framework that will be used to derive the topological derivative. In Section~\ref{sec_topDer} we show that the hypotheses of the abstract theorem are satisfied and obtain the final formula.  In Section~\ref{sec_averaged_adjoint} we compare the Lagrangian framework of Section~\ref{sec_adjFramework} with the averaged framework. 


\section{Assumptions and main results} \label{sec_assumpMainRes}

\subsection{Preliminaries: notation and definitions}

\paragraph{Function spaces}
Standard $L^p$ spaces and Sobolev spaces on an open set $\Dsf\subset \VR^d$ are denoted $L_p(\Dsf)$ and $W^k_p(\Dsf)$, respectively, where $p\ge 1$ and $k\ge 1$. In case $p=2$ and $k\ge 1$  we set as usual $H^k(\Dsf):= W^k_2(\Dsf)$.   Vector valued spaces are denoted $L_p(\Dsf)^d:=L_p(\Dsf,\VR^d)$ and $W^k_p(\Dsf)^d:=W^k_p(\Dsf,\VR^d)$.  We denote by $H^1_0(\Dsf)$ the subspace of functions in $H^1(\Dsf)$ with vanishing trace on $\partial\Dsf$. Given a normed vector space $V$ we denote by $\mathcal L(V,\VR)$ the space of linear and continuous functions on $V$.  We denote by $B_\delta(x)$ the ball centred at $x$ with radius $\delta >0$ and  set $\bar B_\delta(x) :=\overbar{B_\delta(x)}$. For the ball centered at $x=0$ we write $B_\delta:= B_\delta(0)$. 

For $d\ge 1$  and $1\le p <\infty$,  we set  $BL_p(\VR^d) :=  \{u\in W^1_{p,\text{loc}}(\VR^d):\; \nabla u \in L_p(\VR^d)^d\}$ 
and define the \emph{Beppo-Levi space} as the quotient space  $\dot{BL}_p(\VR^d) := BL_p(\VR^d)/\VR$, where $/\VR$ means that we quotient out the constant functions. We denote by $[u]$ the equivalence classes of $\dot{BL}(\VR^d)$. Equipped with the norm
      \ben
        \|[u]\|_{ \dot{BL}_p(\VR^d) } := \|\nabla u\|_{ L_p(\VR^d)^d}, \quad u\in [u],
      \een
the Beppo-Levi space is a  Banach space  (see \cite{a_DELI_1955a,a_ORSU_2012a}) and $C^\infty_c(\VR^d)/\VR$ is dense in $\dot{BL}_p(\VR^d)$.  In case $p=2$ the space $\dot{BL}(\VR^d)$ becomes a Hilbert space and we abbreviate this space simply with $\dot{BL}(\VR^d)$.

  Moreover, we write $\fint_A f\; dx := \frac{1}{|A|} \int_{A} f\; dx$ to indicate the average of $f$ over a measurable set $A$ with measure $|A|<\infty$. 
  We equip $\VR^d$ with the Euclidean norm $| \cdot |$ and use the same notation for the corresponding matrix (operator) norm.

 \paragraph{Definition of topological derivative}
 Before we state our main result we recall the definition of the topological derivative. 
 We restrict ourselves to the special case as it was introduced in \cite{SokolowskiZochowski1999} and refer the reader to \cite[pp. 4]{b_NOSO_2013a} for the more general definition.
\begin{definition}[Topological derivative]
    Let $\Dsf\subset \VR^3$ be an open set and $\Omega\subset \Dsf$ an open subset.  Let $\omega\subset \VR^3$ be open with $0\in \omega$. Define for $z\in \VR^3$,  $\omega_\eps(z) := z+\eps \omega$. 
    Then the topological derivative of $J$ at $\Omega$ at the point $z\in \Dsf \setminus \partial \Omega$ is defined by 
		\ben \label{def_TD}
		dJ(\Omega)(z) =  \left\{\begin{array}{ll}
                \lim_{\eps\searrow 0}\frac{J(\Omega\setminus \omega_\eps(z)) - J(\Omega)}{|\omega_\eps(z)|} & \text{ if } z \in \Omega, \\
                \lim_{\eps\searrow 0}\frac{J(\Omega\cup  \omega_\eps(z)) - J(\Omega)}{|\omega_\eps(z)|} & \text{ if } z\in  \Dsf \setminus \overbar \Omega.
		\end{array}\right.
		\een
\end{definition}
Without loss of generality, we will restrict ourselves to the second case and will always assume $ z \in D\setminus \overbar \Omega$. The derivation for the case $z \in  \Omega$ is analogous, cf. Remark \ref{rem_z}.

\subsection{Main results}\label{subsec_main_results}

We need the following assumptions:
\begin{assumption}\label{A:nonlinearity}
    There are constants $c_1, c_2, c_3$ such that the functions $a_i:\VR^d\to \VR^d$, $i=1,2$ are differentiable and satisfy:
    \begin{itemize}
        \item[(i)] 
            $(a_i(x)-a_i(y))\cdot (x-y) \ge  c_1 |x-y|^2   \quad \text{ for all } x,y\in \VR^d.$
        \item[(ii)] 
           $ |a_i(x)-a_i(y)|\le  c_2|x-y|  \quad \text{ for all } x,y\in \VR^d.$
        \item[(iii)] 
            $  |\partial a_i(x)-\partial a _i(y)|\le c_3 |x-y|  \quad \text{ for all } x,y\in \VR^d.$
    \end{itemize}
\end{assumption}

\begin{remark}
    By using the inverse triangle inequality and choosing $y=0$, we get from Assumption \ref{A:nonlinearity}(ii) and (iii) that
    \begin{align} 
        | a_i(x) | &\leq |a_i(0)| + c_2 |x|, \label{rem_ai} \\
        | \partial a_i(x) | & \leq |\partial a_i(0)| + c_3 |x|, \label{rem_aiii} 
    \end{align}
    for $i=1,2$ and for all $x \in \VR^d$. Notice also that using (ii), we get
    \ben
    |\partial a_i(x)v|= \lim_{t\searrow 0} |a_i(x+tv) - a_i(x)|/t\le c_2|v|,
    \een
for $i=1,2$ and  all $x,v\in \VR^d$.
\end{remark}

Properties (i) and (ii) of Assumption \ref{A:nonlinearity} imply that the operator $A_\Omega:H^1_0(\Dsf)\to (H^1_0(\Dsf))^*$ defined by $\langle A_\Omega \varphi,\psi\rangle := \int_\Dsf\Ca_\Omega(x,\nabla \varphi)\cdot \nabla \psi\;dx$ is Lipschitz continuous and strongly monotone for all measurable $\Omega  \subset \Dsf$. Hence the state equation \eqref{E:weakformulation} admits a unique solution by the theorem of Zarantonello; see \cite[p.504, Thm. 25.B]{b_ZE_1990a}.

    We restrict ourselves to Dirichlet boundary conditions but also other boundary conditions, e.g., Neumann boundary conditions, could be considered. In what follows at many places we extend functions $u\in H^1_0(\Omega)$  to function $\tilde u \in H^1(\VR^d)$ by setting $u$ to zero outside of $\Dsf$. When dealing with other boundary conditions, we can replace 
    this extension by the standard Sobolev extension operator $E:H^1(\Dsf) \to H^1(\VR^d)$.

Before we state our main result we introduce the adjoint $p\in H^1_0(\Dsf)$ as the solution to
\ben \label{eq_adjoint}
\int_\Dsf \partial_{u}\Ca_\Omega(x,\nabla u)(\nabla \varphi)\cdot \nabla p\;dx = -  \int_\Dsf 2\nabla (u-u_d)\cdot \nabla \varphi\;dx  \quad \text{ for all } \varphi\in H^1_0(\Dsf).
\een
In view of the monotonicity of $\Ca_\Omega$ the previous equation has according to Lax-Milgram a unique solution in $H^1_0(\Dsf)$.

\begin{figure}
    \centering \includegraphics[width=0.5\textwidth]{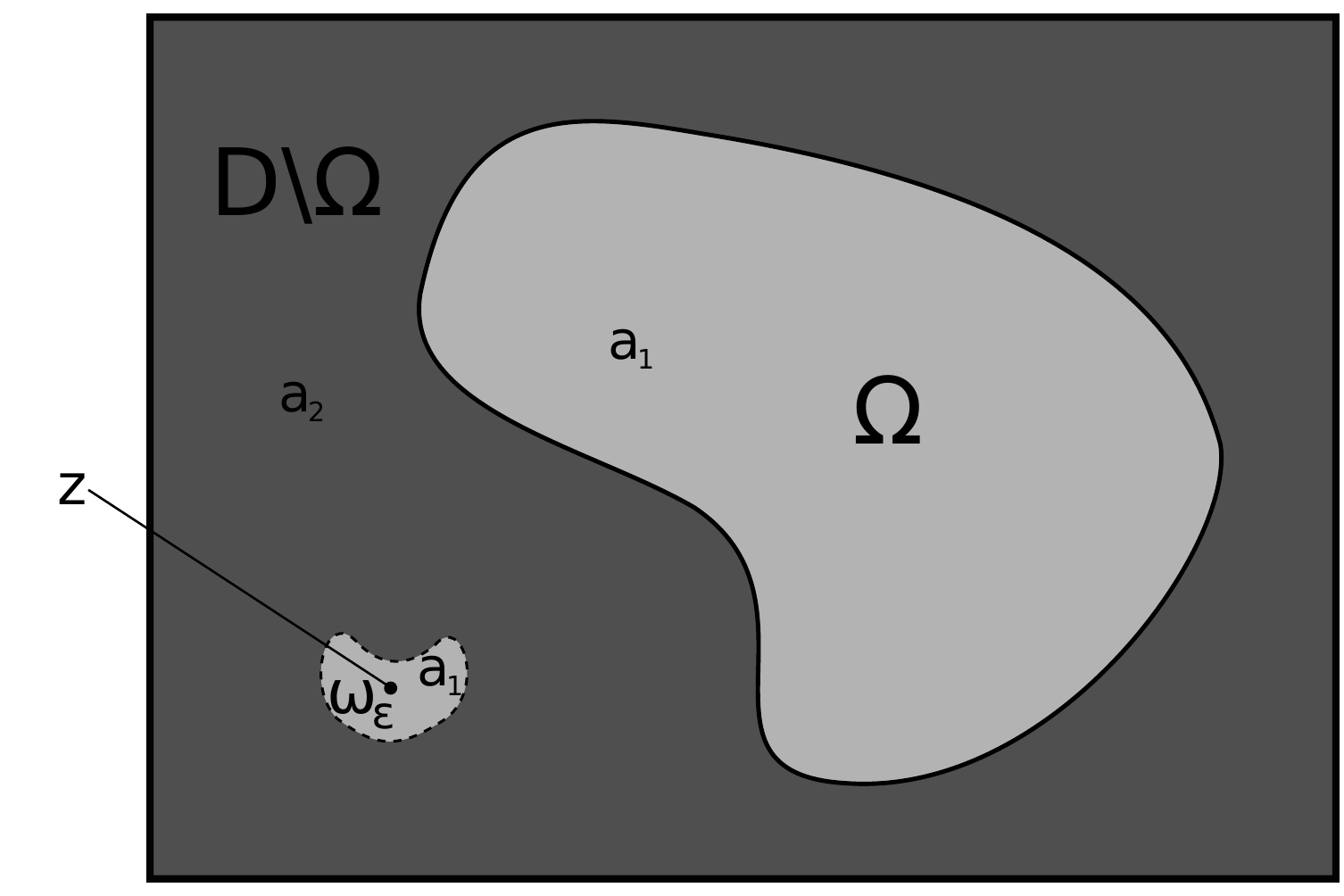}
    \caption{Setting for topological derivative: Inclusion $\omega_\eps$ of radius $\eps>0$ containing material $a_1$ around point $z \in \Dsf \setminus \overbar \Omega$ (where material $a_2$ is present).}

    \label{fig_setting}
\end{figure}

We fix the following setting for the topological perturbation (cf. Figure~\ref{fig_setting}):
\begin{itemize}\setlength\itemsep{0.3em}
  \item an open and bounded set $\omega\subset \VR^d$ with $0\in \omega$,
  \item an open set $\Omega \Subset \Dsf$ and the inclusion point $z:=0 \in \Dsf\setminus \overbar \Omega$,
  \item the perturbation $\omega_\eps(z) := \eps \omega$ and $\eps\in[0,\tau]$, where $\tau >0$ is such that $\omega_\eps(z) \Subset \Dsf\setminus \overbar \Omega$ for all $\eps \in [0,\tau]$.
  \item the perturbed shape $\Omega_\eps(z) := \Omega \cup \omega_\eps(z)$
  \item $T_\eps (x) := \eps x$, $x\in  \VR^d $, $\eps \ge 0$
\end{itemize}
To simplify notation we will often write $\omega_\eps$ instead of $\omega_\eps(z)$, $\Omega_\eps$ instead of $\Omega_\eps(z)$ and $x_\eps$ instead of $T_\eps(x)$. For $\eps>0$ we introduce the notation 
$\eps^{-1}\Dsf:= T_\eps^{-1}(\Dsf)$.

Let $\ell(\eps):= |\omega_\eps|$, and introduce the Lagrangian $ G:[0,\tau]\times H^1_0(\Dsf)\times H^1_0(\Dsf) \to \VR$ associated with the perturbation $\omega_\eps$ by 
\ben\label{eq:lagrange_scale_eps}
G(\eps, u, p) :=  \int_{\Dsf} | \nabla(u - u_d)|^2 \; dx +  \int_{\Dsf} \Ca_{\Omega_\eps}(x, \nabla u) \cdot \nabla p \; dx - \int_{\Dsf} f p \;dx.
\een
Here, the operator $\Ca_{\Omega_\eps}$ is defined according to \eqref{eq_AOmega} with $\Omega_\eps = \Omega \cup \omega_\eps$.

Now we can state our main result of this paper:
\begin{maintheorem}
Let Assumption~\ref{A:nonlinearity} be satisfied. Let $\Omega \subset D$ open and $u_0$ the solution to \eqref{E:weakformulation} and $p_0$ the solution to \eqref{eq_adjoint}. Let $z \in  \Dsf \setminus \overbar \Omega$ and assume that $u_0\in C^{1,\alpha}(\overline{B_\delta(z)})$ and $p_0\in C^1(\overline{B_\delta(z)})$ for some $\delta >0$ and $0 < \alpha<1$. Assume further that $\nabla p_0\in L_\infty(\Dsf)^d$. 
    \begin{itemize}
    \item[(a)] Then the assumptions of Theorem~\ref{thm:diff_lagrange} are satisfied for the Lagrangian $G$ given by \eqref{eq:lagrange_scale_eps} and hence the topological derivative at $z\in \Dsf \setminus \overbar \Omega$ is given by 
        \ben\label{eq:top_formula}
dJ(\Omega)(z) = \partial_{\ell} G(0, u_0, p_0) + R_1 ( u_0, p_0) + R_2(u_0,p_0) 
  \een
\item[(b)] 
We have 
\ben
\partial_{\ell} G(0, u_0, p_0) = ((a_1 (U_0) - a_2 (U_0)) \cdot P_0
\een
and 
\ben\label{E:R_term}
\begin{split}
    R_1( u_0, p_0) 
    = \frac{1}{|\omega|} \bigg(\int_{\VR^d} 
    \big[\Ca_\omega(x, \nabla K+U_0) - \Ca_\omega(x,U_0) - \partial_{u} \Ca_\omega(x,U_0 )& (\nabla K) 
               \big] 
           \cdot P_0\;dx\\ +\int_{\VR^d}|\nabla K|^2\;dx\bigg)
\end{split}
\een
and 
\ben
R_2(u_0,p_0)=          \frac{1}{|\omega|}\int_{\omega} \left[\partial_{u} a_1 (U_0) - \partial_{u} a_2 (U_0) \right] (\nabla K) \cdot P_0   \;dx 
\een
where $U_0 := \nabla u_0(z)$, $P_0:= \nabla p_0(z)$ and $\Ca_\omega(x,y):= a_1(y)\chi_\omega(x) + a_2(y)\chi_{\VR^d\setminus \omega}(x)$.
Here  $K \in \dot{BL}(\VR^d)$ is the unique solution to
    \ben
\begin{split}
    \int_{\VR^d} &  (\Ca_\omega(x, \nabla K+U_0  ) - \Ca_\omega(x, U_0 ))\cdot \nabla \varphi \; dx \\
                 &  = - \int_{\omega}(a_1 (U_0 ) - a_2 (U_0 )) \cdot \nabla \varphi \;dx \quad \text{ for all } \varphi\in BL(\VR^d).
\end{split}
\een 
\end{itemize}
\end{maintheorem}

\begin{remark} \label{rem_z}
    We restrict ourselves to the case where $z \in D \setminus \overbar \Omega$ without loss of generality. However, the exact same proof can be conducted in the case where $z \in  \Omega$. In that case, the formula for the topological derivative is obtained by just switching the roles of $a_1$ and $a_2$ in the theorem above (in particular also in the definition of $\Ca_\omega$).
    
    The assumption $z=0$ is without loss of generality, too. In the general case, this situation can be obtained by a simple change of the coordinate system.
\end{remark}

\begin{remark}
    Although we assume $f\in L_2(\Dsf)$, also more general right hand sides, such as
    $f_\Omega := \chi_\Omega f_1 + \chi_{\Dsf\setminus \Omega} f_2$ with $f_1,f_2\in L_2(\Dsf)$ could be considered with minor changes. 
\end{remark}

\begin{remark}
    We note that in \cite{a_AMBO_2017a} the topological derivative for a quasi-linear problem in $L_p$ spaces is considered. We believe that our analysis can also be transferred to this setting. 
\end{remark}

\begin{remark}
Although we did not treat the limiting case where $a_1$ or $a_2$ is zero, this can be done 
in a similar fashion. We refer to Section~5 in \cite{Sturm2019}. Dirichlet conditions on the inclusion using our approach have to be studied in different manner and deserve further research. 
\end{remark}

\section{Lagrangian framework}\label{sec_adjFramework}
In this section we recall results on a Lagrangian framework, which is a suitable 
refinement of \cite{Delfour2018}. These abstract results
will be used to derive the topological derivative for our quasi-linear model problem. We begin with the definition of a Lagrangian function; see also \cite{a_DEST_2017a}.

\begin{definition}[parametrised Lagrangian]
 Let $X$ and $Y$ be vector spaces and $\tau >0$. A parametrised Lagrangian (or short Lagrangian) is a function
\begin{gather*}
(\eps,\fu,\fp) \mapsto  G(\eps,\fu,\fp): [0, \tau ] \times X \times Y \to \VR,
\end{gather*}
satisfying, 
        \ben
\fp\mapsto G(\eps,\fu,\fp) \quad \text{ is }  \text{affine} \text{ on } Y.
\een
\end{definition}

\begin{definition}[state and adjoint state]
Let $\eps \in [0,\tau]$.  We define the state equation by: find $u_\eps \in X$, such that 
  \ben\label{eq:state}
   \partial_\fp G(\eps,\fu_\eps,0)(\varphi)=0 \quad \text{ for all } \varphi\in Y. 
\een
The set of states is denoted $E(\eps)$. We define the adjoint state by: find $p_\eps\in Y$, such that 
\ben\label{eq:adjoint_state}
   \partial_u G(\eps,\fu_\eps,\fp_\eps)(\varphi)=0 \quad \text{ for all } \varphi\in  X. 
\een
The set of adjoint states associated with $(\eps,u_\eps)$ is denoted $Y(\eps,u_\eps)$.
\end{definition}

\begin{definition}[$\ell$-differentiable Lagrangian]\label{def:lagrangian}
    Let $X$ and $Y$ be vector spaces and $\tau >0$. Let $\ell: [0,\tau] \to \VR$ be a given function satisfying $\ell(0)=0$ and $\ell(\eps)>0$ for 
$\eps \in (0,\tau]$. An $\ell$-differentiable parametrised Lagrangian is a parametrised Lagrangian $G:[0,\tau]\times X\times Y\to \VR$, satisfying,
\begin{itemize}
\item[(a)] for all $v,w\in X$ and $p\in Y$, 
    \ben\label{E:c1_lagrangian}
    s\mapsto G(\eps, v+s w, \fp) \text{ is continuously differentiable on } [0,1].
    \een
\item[(b)] for all $\fu_0\in E(0)$ and $\fp_0\in Y(0,\fu_0)$ the limit 
\ben
\partial_\ell G(0,\fu_0,\fp_0) := \lim_{\eps\searrow 0}\frac{G(\eps,\fu_0, \fp_0) - G(0,\fu_0, \fp_0)}{\ell(\eps)} \quad \text{ exists}.
\een
\end{itemize}
\end{definition}

\begin{assumption*}[H0]
    \begin{itemize}
     \item[(i)] We assume that for all $\eps \in [0,\tau]$, the set $E(\eps)=\{u_\eps\}$ is a singleton. 
     \item[(ii)] We assume that the adjoint equation for $\eps=0$, $\partial_u G(0,u_0,p_0)(\varphi)=0$ for all $\varphi \in E$, admits a unique solution.
    \end{itemize}
\end{assumption*}

We now give sufficient conditions when the function 
\ben\label{def_g}
\begin{split}
[0,\tau] &\to \VR \\
  \eps &\mapsto g(\eps) := G(\eps ,\fu_\eps,0),
  \end{split}
\een 
is one sided $\ell$-differentiable, that means, when the limit
\ben\label{E:dEll}
    d_{\ell}g(0):= \lim_{\eps\searrow 0}\frac{g(\eps)-g(0)}{\ell(\eps)}
    \een
    exists, where $\ell: [0,\tau] \to \VR$ is a given function satisfying $\ell(0)=0$ and $\ell(\eps)>0$ for $\eps \in (0,\tau]$. 
    
    The following theorem is a refinement of \cite[Thm. 3.3]{Delfour2018}. Instead of having one $R$-term we obtain two terms, which simplifies the later analysis. 
\begin{theorem} \label{thm:diff_lagrange}
    Let $G:[0,\tau]\times X\times Y \to \VR$ be an $\ell$-differentiable parametrised Lagrangian 
    satisfying Hypothesis~(H0). Define for $\eps > 0$, 
    \ben
    R_1^\eps (u_0,p_0):=   \frac{1}{\ell(\eps)}  \int_0^1 \left(\partial_u G(\eps,s u_\eps + (1-s)u_0, p_0) -   \partial_u G(\eps, u_0, p_0)\right)(u_\eps - u_0) \; ds
\een
and
\ben
\begin{split}
    R_2^\eps ( u_0 ,p_0 ) :=\frac{1}{\ell(\eps)}(\partial_u G(\eps, u_0,p_0) - \partial_u G(0,u_0,p_0))(u_\eps - u_0).
\end{split}
\een
If $R_1(u_0,p_0) := \lim_{\eps\searrow 0} R^\eps_1(u_0,p_0)$ and $R_2(u_0,p_0) := \lim_{\eps\searrow 0} R^\eps_2(u_0,p_0)$ exist, then  
\benn
d_\ell g(0) = \partial_\ell G(0,\fu_0,\fp_0) + R_1(u_0,p_0) + R_2(u_0,p_0).
\eenn
\end{theorem}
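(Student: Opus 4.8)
The plan is to exploit that $G$ is affine in its third argument, together with the state and adjoint equations, in order to reduce the statement to a single application of the fundamental theorem of calculus. First I would observe that, since $\fp \mapsto G(\eps,\fu,\fp)$ is affine, one has $G(\eps,\fu,\fp) = G(\eps,\fu,0) + \partial_\fp G(\eps,\fu,0)(\fp)$ for every $\fu,\fp$. Because $u_\eps$ solves the state equation \eqref{eq:state}, i.e.\ $\partial_\fp G(\eps,u_\eps,0)(\varphi)=0$ for all $\varphi\in Y$, the last term vanishes when $\fu=u_\eps$, giving the key identity $g(\eps) = G(\eps,u_\eps,0) = G(\eps,u_\eps,p_0)$. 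Applying the same reasoning at $\eps=0$ (using $g(0)=G(0,u_0,0)$ and the state equation for $u_0$) yields $g(0)=G(0,u_0,p_0)$. The freedom to insert the fixed adjoint state $p_0$ ``for free'' is the heart of the Lagrangian trick.

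Next I would write the increment as
\[
g(\eps)-g(0) = \underbrace{\big(G(\eps,u_\eps,p_0) - G(\eps,u_0,p_0)\big)}_{\text{variation in } u} + \underbrace{\big(G(\eps,u_0,p_0) - G(0,u_0,p_0)\big)}_{\text{variation in } \eps}.
\]
After division by $\ell(\eps)$, the second group converges to $\partial_\ell G(0,u_0,p_0)$ directly from Definition~\ref{def:lagrangian}(b). For the first group I would invoke hypothesis \eqref{E:c1_lagrangian}, which guarantees that $s\mapsto G(\eps,u_0+s(u_\eps-u_0),p_0)$ is continuously differentiable on $[0,1]$, and rewrite it via the fundamental theorem of calculus as $\int_0^1 \partial_\fu G(\eps,u_0+s(u_\eps-u_0),p_0)(u_\eps-u_0)\,ds$.

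The final bookkeeping step is to peel off the two $R$-terms by adding and subtracting inside this integral. Subtracting and re-adding $\partial_\fu G(\eps,u_0,p_0)$ inside the integrand splits it into exactly $\ell(\eps)\,R_1^\eps$ plus the (constant-in-$s$) term $\partial_\fu G(\eps,u_0,p_0)(u_\eps-u_0)$; subtracting and re-adding $\partial_\fu G(0,u_0,p_0)$ in this latter term splits it into $\ell(\eps)\,R_2^\eps$ plus $\partial_\fu G(0,u_0,p_0)(u_\eps-u_0)$. The crucial point is that this last term vanishes: since $u_\eps-u_0\in X$ and $p_0$ is an adjoint state, the adjoint equation \eqref{eq:adjoint_state} gives $\partial_\fu G(0,u_0,p_0)(u_\eps-u_0)=0$. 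Collecting everything produces the exact identity
\[
\frac{g(\eps)-g(0)}{\ell(\eps)} = R_1^\eps(u_0,p_0) + R_2^\eps(u_0,p_0) + \frac{G(\eps,u_0,p_0)-G(0,u_0,p_0)}{\ell(\eps)},
\]
and passing to the limit $\eps\searrow 0$, using the assumed existence of $R_1$ and $R_2$ together with $\ell$-differentiability, yields the claim.

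Since the existence of the limits $R_1,R_2$ and of the $\ell$-derivative are hypotheses of the theorem, there is no genuine analytic obstacle here; the statement is essentially an algebraic identity. The only points requiring care are the order of the two add/subtract manipulations and the verification that $p_0$ may legitimately replace $0$ in the third slot — this is precisely where affineness in $\fp$ and \emph{both} the state and the adjoint equations enter. The value of this refinement over \cite[Thm. 3.3]{Delfour2018} is exactly that anchoring the $u$-variation first at $u_0$ inside the $s$-integral and then at $\eps=0$ isolates the two remainders $R_1$ and $R_2$ separately, which is what makes the concrete estimates carried out in Section~\ref{sec_topDer} tractable.
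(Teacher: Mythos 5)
Your proposal is correct and follows essentially the same argument as the paper: insert $p_0$ via the affine structure and the state equations, split the increment into a $u$-variation and an $\eps$-variation, apply the fundamental theorem of calculus (justified by \eqref{E:c1_lagrangian}), and peel off $R_1^\eps$ and $R_2^\eps$ by adding and subtracting $\partial_u G(\eps,u_0,p_0)$ and $\partial_u G(0,u_0,p_0)$, the latter term vanishing by the adjoint equation. The only difference is expository: you spell out the affine-in-$p$ substitution $g(\eps)=G(\eps,u_\eps,p_0)$ and $g(0)=G(0,u_0,p_0)$, which the paper uses implicitly in its first equality.
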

\begin{proof}
    Using $\partial_uG(0,u_0,p_0)(\varphi)=0$ for all $\varphi \in E$ and the fundamental theorem of calculus, we obtain
    \benn
    \begin{split}
        g(\eps) - g(0) & = G(\eps,u_\eps,p_0) - G(0,u_0,p_0) = G(\eps,u_\eps,p_0) - G(\eps,u_0,p_0)  +  G(\eps,u_0,p_0) - G(0,u_0,p_0) \\ 
        =& \int_0^1 \partial_u G(\eps,su_\eps + (1-s)u_0,p_0)(u_\eps-u_0)\; ds  +  G(\eps,u_0,p_0) - G(0,u_0,p_0) \\
        = & \int_0^1 ( \partial_u G(\eps,su_\eps + (1-s)u_0,p_0) - \partial_uG(\eps, u_0,p_0)) (u_\eps - u_0)\; ds \\
          & +  (\partial_u G(\eps,u_0,p_0) - \partial_u G(0,u_0,p_0))(u_\eps-u_0) \\
                       & +  G(\eps,u_0,p_0) - G(0,u_0,p_0).
\end{split}
    \eenn
    Notice that the fundamental theorem of calculus is applicable in view of assumption \eqref{E:c1_lagrangian}. Now dividing by $\ell(\eps)$, using Hypothesis~(H0) and that $R_1(u_0,p_0)$ and $R_2(u_0,p_0)$ exist, we can pass to the limit $\eps\searrow 0$. This finishes the proof. 
\end{proof}

\begin{remark}
    In the next section, we will apply the abstract result of Theorem \ref{thm:diff_lagrange} to the Lagrangian introduced in \eqref{eq:lagrange_scale_eps}. There, it holds that $g(\eps) = J(\Omega_\eps)$ and, when using $\ell(\eps) = |\omega_\eps|$, the derivative \eqref{E:dEll} corresponds to the topological derivative defined in \eqref{def_TD}.
\end{remark}

\section{The topological derivative} \label{sec_topDer}

Let $X=Y= H^1_0(\Dsf)$ and let the Lagrangian $ G$ be defined  as in \eqref{eq:lagrange_scale_eps}. 
We are now going to verify that the hypotheses of Theorem~\ref{thm:diff_lagrange} are satisfied for this $G$ with $\ell(\eps) = |\omega_\eps|$.

\subsection{Analysis of the perturbed state equation}
We introduce the abbreviation $\Ca_\eps(x,y) := \Ca_{\Omega_\eps}(x,y)$ for $x,y \in \VR^d$.
The perturbed state equation reads: find $u_\eps \in H^1_0(\Dsf)$ such that
\ben
\partial_\fp G(\eps, u_\eps, 0)(\varphi)=0 \quad \text{ for all } \varphi \in H^1_0(\Dsf),
\een
or equivalently $ u_\eps \in H^1_0(\Dsf) $ satisfies
\ben\label{eq:state_per}
\int_{\Dsf} \Ca_{\eps}(x, \nabla u_\eps ) \cdot \nabla \varphi \; dx = \int_{\Dsf} f\varphi\;dx \quad \mbox{for all } \varphi \in H^1_0(\Dsf).  
\een
Since \eqref{eq:state_per} admits a unique solution we have that $E(\eps) = \{ u_\eps\}$ is a singleton.  Together with the previous observation that \eqref{eq_adjoint} admits a unique solution, we have that Hypothesis~(H0) is satisfied.
\begin{lemma}\label{lem:u_ueps}
    Let Assumption~\ref{A:nonlinearity}(i),(ii) be satisfied. There is a constant $C>0$, such that for all small $\eps>0$, 
	\ben\label{eq:est_u_eps_D}
    \|u_\eps - u_0\|_{H^1(\Dsf)} \le C\eps^{d/2}. 
	\een
\end{lemma}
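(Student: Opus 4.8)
The plan is to prove \eqref{eq:est_u_eps_D} by a standard energy/monotonicity argument, the whole point being that the perturbed and unperturbed operators differ only on the small inclusion $\omega_\eps$, whose measure is $|\omega_\eps| = |\omega|\eps^d$. First I would subtract the weak formulation \eqref{E:weakformulation} satisfied by $u_0$ (with operator $\Ca_\Omega$) from the perturbed state equation \eqref{eq:state_per}, and test the resulting identity with $\varphi = u_\eps - u_0 \in H^1_0(\Dsf)$. Since the right-hand side $f$ is the same in both equations, the source terms cancel and I obtain
\[
\int_{\Dsf}\big(\Ca_\eps(x,\nabla u_\eps) - \Ca_\Omega(x,\nabla u_0)\big)\cdot\nabla(u_\eps - u_0)\;dx = 0.
\]

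Next I would insert the intermediate quantity $\Ca_\eps(x,\nabla u_0)$, splitting the integrand into a \emph{monotone} part $\big(\Ca_\eps(x,\nabla u_\eps) - \Ca_\eps(x,\nabla u_0)\big)\cdot\nabla(u_\eps - u_0)$ and a \emph{defect} part $\big(\Ca_\eps(x,\nabla u_0) - \Ca_\Omega(x,\nabla u_0)\big)\cdot\nabla(u_\eps - u_0)$. At each $x$ the map $\Ca_\eps(x,\cdot)$ is either $a_1$ or $a_2$, so Assumption~\ref{A:nonlinearity}(i) applied pointwise bounds the monotone integrand below by $c_1|\nabla(u_\eps-u_0)|^2$, giving $\int_\Dsf(\cdots) \ge c_1\|\nabla(u_\eps-u_0)\|_{L_2(\Dsf)}^2$. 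The key structural observation is that $\Ca_\eps = \Ca_{\Omega\cup\omega_\eps}$ and $\Ca_\Omega$ agree outside $\omega_\eps$ (both are $a_1$ on $\Omega$ and $a_2$ elsewhere, whereas on $\omega_\eps \subset \Dsf\setminus\overbar\Omega$ one is $a_1$ and the other $a_2$); hence the defect part is supported on $\omega_\eps$ and equals $\big(a_1(\nabla u_0)-a_2(\nabla u_0)\big)\cdot\nabla(u_\eps-u_0)$ there. Moving the defect term to the right and applying the Cauchy--Schwarz inequality yields
\[
c_1\|\nabla(u_\eps-u_0)\|_{L_2(\Dsf)} \le \|a_1(\nabla u_0)-a_2(\nabla u_0)\|_{L_2(\omega_\eps)}.
\]

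It then remains to estimate the right-hand side, and this is the only step with genuine content. Here the regularity hypothesis of the Main Theorem enters: since $u_0\in C^{1,\alpha}(\overline{B_\delta(z)})$ with $z=0$, the gradient $\nabla u_0$ is bounded on $B_\delta(z)$, and for $\eps$ small enough $\omega_\eps=\eps\omega \subset B_\delta(z)$. Combined with the growth bound \eqref{rem_ai} stemming from Assumption~\ref{A:nonlinearity}(ii), the integrand $|a_1(\nabla u_0)-a_2(\nabla u_0)|$ is bounded on $\omega_\eps$ by a constant independent of $\eps$, so that $\|a_1(\nabla u_0)-a_2(\nabla u_0)\|_{L_2(\omega_\eps)}\le C|\omega_\eps|^{1/2}=C|\omega|^{1/2}\eps^{d/2}$. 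Dividing by $c_1$ and using the Poincaré inequality on $H^1_0(\Dsf)$ to pass from the gradient seminorm to the full $H^1(\Dsf)$ norm then gives \eqref{eq:est_u_eps_D}. I expect the main (and really the only) obstacle to be precisely this last estimate: the \emph{sharp} rate $\eps^{d/2}$, as opposed to a mere $o(1)$, relies on the local $L_\infty$-bound for $\nabla u_0$ near the inclusion point so that $\int_{\omega_\eps}|a_1(\nabla u_0)-a_2(\nabla u_0)|^2\,dx$ scales like $|\omega_\eps|=|\omega|\eps^d$; without such local boundedness one would only recover convergence to zero with no explicit rate.
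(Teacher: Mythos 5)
Your proposal is correct and follows essentially the same route as the paper: subtract the two state equations, insert the intermediate quantity $\Ca_\eps(x,\nabla u_0)$ so that the defect term is supported on $\omega_\eps$, use monotonicity (Assumption~\ref{A:nonlinearity}(i)) and Cauchy--Schwarz, bound the defect via the local $C^{1,\alpha}$-regularity of $u_0$ near $z$ together with the growth bound \eqref{rem_ai} to get the $|\omega_\eps|^{1/2}=|\omega|^{1/2}\eps^{d/2}$ scaling, and finish with Poincar\'e. You also correctly identify the only point of substance, namely that the sharp rate hinges on the local boundedness of $\nabla u_0$ on $\overbar B_\delta(z)$, which is exactly what the paper uses.
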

\begin{proof}
Subtracting \eqref{eq:state_per} for $\eps >0$ and $\eps=0$ yields
\ben\label{eq:diff_per_state}
\begin{split}
    \int_{\Dsf} &  (\Ca_\eps(x, \nabla u_\eps ) - \Ca_\eps(x, \nabla u_0 ))\cdot \nabla \varphi \; dx \\
                &  = - \int_{\omega_\eps}(a_1 (\nabla u_0 ) - a_2 (\nabla u_0 )) \cdot \nabla \varphi \;dx \quad \text{ for all } \varphi \in H^1_0(\Dsf).
\end{split}
\een
Therefore testing \eqref{eq:diff_per_state} with $\varphi := u_\eps - u_0$, then applying H\"older's inequality and using the monotonicity of $\Ca_\eps$ leads to 
\ben
\|\nabla(u_\eps - u_0)\|_{L_2(\Dsf)^d}^2 \le C \sqrt{|\omega_\eps|} (\|\nabla u_0\|_{C(\overbar B_\delta(z))^d}+1)\|\nabla(u_\eps - u_0)\|_{L_2(\Dsf)^d},
\een
where $0 < \eps <\delta$ and $C$ is a generic constant. Here, we also used \eqref{rem_ai}. Now the result follows from $|\omega_\eps|=|\omega|\eps^d$ and the Poincar\'e inequality.
\end{proof}

\begin{definition}
We define the variation of the state by 
\ben
K_\eps := \frac{(u_\eps - u_0)\circ T_\eps}{\eps} \in H_0^1(\eps^{-1}\Dsf), \quad \eps > 0.
\een
By extending $u_\eps$ and $u_0$ by zero outside of  $\Dsf$, we can view $K_\eps $ as an element of $BL(\VR^d)$ (and its equivalence class $[K_\eps]$ as element of $\dot{BL}(\VR^d)$).
\end{definition}
Our main result of this section is the following theorem:

\begin{theorem}\label{T:Keps_strong_K}
Let Assumption~\ref{A:nonlinearity}(i),(ii) be satisfied.
    \begin{itemize}
\item[(i)] There exists a unique solution $K \in \dot{BL}(\VR^d)$ to
    \ben\label{E:limit_K}
\begin{split}
    \int_{\VR^d} &  (\Ca_\omega(x,  \nabla K + U_0  ) - \Ca_\omega(x, U_0 ))\cdot \nabla \varphi \; dx \\
                 &  = - \int_{\omega}(a_1 (U_0 ) - a_2 (U_0 )) \cdot \nabla \varphi \;dx \quad \text{ for all } \varphi \in  BL(\VR^d),
\end{split}
\een
where $U_0:= \nabla u_0(z)$ and $\Ca_\omega(x,y):= a_1(y)\chi_\omega(x) + a_2(y)\chi_{\VR^d\setminus \omega}(x)$.  
\item[(ii)] We have $\nabla K_\eps \to \nabla K$ strongly in $L_2(\VR^d)^d$ as $\eps  \searrow 0$.
\end{itemize}
\end{theorem}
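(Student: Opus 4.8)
\emph{Part (i).} The plan is to recognise \eqref{E:limit_K} as a strongly monotone operator equation on the Hilbert space $\dot{BL}(\VR^d)$ and to invoke the theorem of Zarantonello, exactly as the paper already does for \eqref{E:weakformulation}. Concretely, I would define $B:\dot{BL}(\VR^d)\to\dot{BL}(\VR^d)^*$ by $\langle B[u],[\varphi]\rangle:=\int_{\VR^d}(\Ca_\omega(x,\nabla u+U_0)-\Ca_\omega(x,U_0))\cdot\nabla\varphi\,dx$. Since only gradients enter, $B$ is well defined on the quotient, and Assumption~\ref{A:nonlinearity}(ii) gives $|\Ca_\omega(x,\nabla u+U_0)-\Ca_\omega(x,U_0)|\le c_2|\nabla u|\in L_2(\VR^d)$, so this is a bounded functional of $\nabla\varphi$. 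Applying Assumption~\ref{A:nonlinearity}(i) pointwise to $a_1$ and $a_2$ yields strong monotonicity $\langle B[u]-B[v],[u-v]\rangle\ge c_1\|\nabla(u-v)\|_{L_2(\VR^d)}^2$, and (ii) gives Lipschitz continuity. The right-hand side $[\varphi]\mapsto-\int_\omega(a_1(U_0)-a_2(U_0))\cdot\nabla\varphi\,dx$ is bounded on $\dot{BL}(\VR^d)$ because $\omega$ is bounded and $a_1(U_0)-a_2(U_0)$ is a constant vector. Zarantonello's theorem then furnishes a unique $K\in\dot{BL}(\VR^d)$, which is (i).

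\emph{Part (ii): set-up.} I would first record the equation solved by $K_\eps$. Subtracting the two state equations gives \eqref{eq:diff_per_state}; inserting $\varphi(x)=\eps\,\psi(x/\eps)$ with $\psi\in H^1_0(\eps^{-1}\Dsf)$, changing variables $x=\eps y$, and using $\nabla K_\eps(y)=(\nabla u_\eps-\nabla u_0)(\eps y)$, one finds after dividing by $\eps^d$ that $K_\eps$ solves
\[
\int_{\eps^{-1}\Dsf}(\Ca_\eps(\eps y,g_\eps+\nabla K_\eps)-\Ca_\eps(\eps y,g_\eps))\cdot\nabla\psi\,dy=-\int_\omega(a_1(g_\eps)-a_2(g_\eps))\cdot\nabla\psi\,dy,
\]
where $g_\eps(y):=\nabla u_0(\eps y)$. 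Testing with $\psi=K_\eps$, using strong monotonicity on the left and boundedness of $a_i(g_\eps)$ on $\omega$ on the right, gives the uniform bound $\|\nabla K_\eps\|_{L_2(\VR^d)}\le C$; hence along a subsequence $[K_\eps]\tow[K^*]$ in $\dot{BL}(\VR^d)$, that is $\nabla K_\eps\tow\nabla K^*$ weakly in $L_2(\VR^d)^d$. The projection trick enters here: let $\hat K_\eps:=P_\eps K$ be the $\dot{BL}(\VR^d)$-orthogonal projection of $K$ onto the closed subspace $H^1_0(\eps^{-1}\Dsf)$ (extended by zero). Since every $\phi\in C^\infty_c(\VR^d)$ lies in $H^1_0(\eps^{-1}\Dsf)$ once $\eps$ is small (as $0\in\Dsf$ is interior) and $C^\infty_c(\VR^d)/\VR$ is dense in $\dot{BL}(\VR^d)$, the distance of $K$ to these subspaces tends to $0$, so $\nabla\hat K_\eps\to\nabla K$ strongly in $L_2(\VR^d)^d$.

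\emph{Part (ii): the key monotonicity estimate.} The point is that $K_\eps-\hat K_\eps\in H^1_0(\eps^{-1}\Dsf)$ is admissible in the $K_\eps$-equation, whereas $K$ itself is not. Strong monotonicity of $\Ca_\eps(\eps y,\cdot)$ at $g_\eps+\nabla K_\eps$ and $g_\eps+\nabla\hat K_\eps$ gives
\[
c_1\|\nabla K_\eps-\nabla\hat K_\eps\|_{L_2(\VR^d)}^2\le\mathcal I_\eps-\mathcal{II}_\eps,
\]
where $\mathcal I_\eps$, $\mathcal{II}_\eps$ integrate $(\Ca_\eps(\eps y,g_\eps+\nabla K_\eps)-\Ca_\eps(\eps y,g_\eps))$ and $(\Ca_\eps(\eps y,g_\eps+\nabla\hat K_\eps)-\Ca_\eps(\eps y,g_\eps))$ against $\nabla K_\eps-\nabla\hat K_\eps$. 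For $\mathcal I_\eps$ I would insert $\psi=K_\eps-\hat K_\eps$ into the $K_\eps$-equation, reducing it to $-\int_\omega(a_1(g_\eps)-a_2(g_\eps))\cdot(\nabla K_\eps-\nabla\hat K_\eps)\,dy$; since $a_i(g_\eps)\to a_i(U_0)$ uniformly on $\omega$ and $\nabla K_\eps-\nabla\hat K_\eps\tow\nabla(K^*-K)$, this tends to $-\int_\omega(a_1(U_0)-a_2(U_0))\cdot\nabla(K^*-K)\,dy$. For $\mathcal{II}_\eps$ I would show the first factor converges strongly in $L_2(\VR^d)^d$ to $\Ca_\omega(\cdot,U_0+\nabla K)-\Ca_\omega(\cdot,U_0)$; combined with the weak convergence of the second factor and \eqref{E:limit_K} tested with $\varphi=K^*-K$, this gives the \emph{same} limit. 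Thus $\mathcal I_\eps-\mathcal{II}_\eps\to0$, hence $\nabla K_\eps-\nabla\hat K_\eps\to0$ in $L_2$; together with $\nabla\hat K_\eps\to\nabla K$ this yields $\nabla K_\eps\to\nabla K$ strongly along the subsequence. As the limit is subsequence-independent, the full sequence converges and $\nabla K^*=\nabla K$.

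\emph{Main obstacle.} The routine parts are (i) and the uniform bound. The delicate step is the strong $L_2(\VR^d)$-convergence of $\Ca_\eps(\eps\cdot,g_\eps+\nabla\hat K_\eps)-\Ca_\eps(\eps\cdot,g_\eps)$ to $\Ca_\omega(\cdot,U_0+\nabla K)-\Ca_\omega(\cdot,U_0)$: three things move at once, namely the coefficient $\Ca_\eps(\eps\cdot,\cdot)$, which equals $\Ca_\omega$ only away from the receding set $\eps^{-1}\Omega$; the base point $g_\eps\to U_0$ (continuity of $\nabla u_0$ at $z$); and the increment $\nabla\hat K_\eps\to\nabla K$. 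I would split $\VR^d=B_R\cup B_R^c$: on $B_R$ the coefficients coincide for small $\eps$ and the Lipschitz bound (ii) together with $g_\eps\to U_0$ and $\nabla\hat K_\eps\to\nabla K$ in $L_2(B_R)$ controls the difference, while on $B_R^c$ the Lipschitz bound and the equi-smallness of the tails $\|\nabla\hat K_\eps\|_{L_2(B_R^c)}$ and $\|\nabla K\|_{L_2(B_R^c)}$ control it, sending $R\to\infty$ after $\eps\to0$. This tail control, made possible precisely by the strong convergence $\nabla\hat K_\eps\to\nabla K$ from the projection, is what replaces the cut-off and decay-of-$K$ estimates of \cite{a_AMBO_2017a,AmstutzGangl2019}.
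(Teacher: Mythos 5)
Your proposal is correct, and while it rests on the same two pillars as the paper's proof---Zarantonello's theorem for (i), and the projection $\hat K_\eps = P_\eps(K)$ with $\nabla \hat K_\eps \to \nabla K$ for (ii)---it assembles them in a genuinely different way. The paper inserts the intermediate frozen-coefficient solution $H_\eps \in H^1_0(\eps^{-1}\Dsf)$ of \eqref{E:approx_Keps} and proves two quantitative lemmas: $\nabla(H_\eps - \hat K_\eps)\to 0$ by monotonicity and Lipschitz continuity (Lemma~\ref{L:Heps_K}), and $\nabla(K_\eps - H_\eps)\to 0$ by testing \eqref{E:diff_Heps_Keps} and splitting $\VR^d$ along the \emph{expanding} ball $B_{\eps^{-r}}$, where the H\"older rate $|\nabla u_0(x_\eps)-U_0|\le C|x_\eps|^\alpha$ is needed to beat the factor $\eps^{-rd/2}$ (Lemma~\ref{L:Heps_Keps}). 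You skip $H_\eps$ entirely: one monotonicity estimate compares $K_\eps$ with $\hat K_\eps$ directly, $\mathcal I_\eps$ is evaluated through the $K_\eps$-equation, and $\mathcal{II}_\eps$ is handled by strong $L_2$-convergence of the coefficient increment (your \emph{fixed}-ball splitting, with tails made equi-small precisely by $\nabla\hat K_\eps \to \nabla K$) paired against weak subsequential limits, the two limits being identified via \eqref{E:limit_K} and the convergence upgraded by the standard subsequence-uniqueness argument. The trade-off: the paper's route is purely quantitative (explicit smallness $C(\eps)$, no compactness, no subsequences, full-sequence convergence for free), whereas your route is shorter, avoids the $\eps^{-r}$ bookkeeping, and needs only continuity of $\nabla u_0$ at $z$ rather than the $C^{1,\alpha}$ modulus that Lemma~\ref{L:Heps_Keps} consumes. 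A further simplification on your side: your proof that $\nabla P_\eps(K)\to\nabla K$ (nested subspaces, density of $C^\infty_c(\VR^d)/\VR$, vanishing tail of $\nabla K$ outside $\eps^{-1}\Dsf$) is more elementary than the paper's Lemma~\ref{L:projection_convergence}, which argues via weak compactness and Radon--Riesz, and it works equally well for general $p$.
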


    Proof of (i): Thanks to Assumption~\ref{A:nonlinearity} the operator $B_\omega: \dot{BL}(\VR^d) \to \dot{BL}(\VR^d)^*$ defined by $\langle B_\omega\varphi,\psi\rangle := \int_{\VR^d} (\Ca_\omega(x,  \nabla \varphi + U_0  ) - \Ca_\omega(x, U_0 ))\cdot \nabla \psi \; dx$ is a strongly monotone and Lipschitz continuous and hence the unique solvability follows by the theorem of Zarantonello; see \cite[p.504, Thm. 25.B]{b_ZE_1990a}. 
    
Proof of (ii): We split the proof into two lemmas. The idea is as follows:
\begin{enumerate}
    \item introduce the intermediate quantity $H_\eps$ and split $K - K_\eps = K-H_\eps + H_\eps - K_\eps$,
    \item show $K-H_\eps \to 0$,
    \item show  $H_\eps - K_\eps \to 0$.
\end{enumerate}
This splitting is not necessary, but simplifies the presentation.  Note that changing 
variables in \eqref{eq:est_u_eps_D} gives
\ben
\|\nabla K_\eps \|_{L_2(\VR^d)}\le C \quad \text{ for all } \eps > 0.
\een
We start by changing variables in \eqref{eq:diff_per_state} to obtain an equation for $K_\eps$:
\ben\label{E:per_eps}
\begin{split}
    \int_{\VR^d} &  (\Ca_\omega(x,\nabla K_\eps + \nabla u_0(x_\eps) ) - \Ca_\omega(x, \nabla u_0(x_\eps) ))\cdot \nabla \varphi \; dx \\
                &  = - \int_{\omega}(a_1 (\nabla u_0 (x_\eps) ) - a_2 (\nabla u_0(x_\eps) )) \cdot \nabla \varphi \;dx
\end{split}
\een
for all $\varphi \in H^1_0(\eps^{-1}\Dsf)$  where we recall the notation $x_\eps = T_\eps(x)= \eps x$. Similarly as in \cite{a_AMBO_2017a, AmstutzGangl2019} we approximate $K_\eps$ by $H_\eps \in H_0^1(\eps^{-1}\Dsf)$ solution to 
\ben\label{E:approx_Keps}
\begin{split}
\int_{\VR^d}  & (\Ca_\omega(x,\nabla H_\eps + U_0 ) - \Ca_\omega(x, U_0 ))\cdot \nabla \varphi \; dx\\
              & = - \int_{\omega}(a_1 (U_0) - a_2 (U_0)) \cdot \nabla \varphi \;dx \quad \text{ for all } \varphi \in H^1_0(\eps^{-1} \Dsf).
\end{split}
\een
This equation is simply \eqref{E:per_eps} with $\nabla u(x_\eps)$ replaced by $U_0$. 
 We now introduce the projection of $K$ into the space $H^1_0(\eps^{-1}\Dsf)$:  For this we consider the more general situation of $\dot{BL}_p(\VR^d)$. 
    \begin{definition}\label{D:projection}
Let $\eps > 0$ and $1< p <\infty$.   For every $K\in \dot{BL}_p(\VR^d)$ we define its projection $P_\eps(K)\in W^1_{p,0}(\VR^d)$ as the minimiser of 
    \ben\label{E:hat_K_eps}
\min_{\varphi \in W^1_{p,0}(\eps^{-1}\Dsf)} \| \nabla(\varphi - K)\|_{L_p(\eps^{-1}\Dsf)^d}. 
    \een
    So $P_\eps: \dot{BL}_p(\VR^d) \to W^1_{p,0}(\eps^{-1}\Dsf) \subset W^1_{p,0}(\VR^d) $ is a nonlinear operator. 
    \end{definition}
The next lemma shows that the operator $P_\eps$ is continuous with respect to $\eps$.

\begin{lemma}\label{L:projection_convergence}
    For every $K\in \dot{BL}_p(\VR^d)$ it holds that
        \ben\label{E:min_project}
    \nabla (P_\eps(K))  \to \nabla K \quad  \mbox{ strongly in }L_p(\VR^d)^d \text{ as }  \eps \searrow 0.
        \een       
    \end{lemma}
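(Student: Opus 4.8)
\emph{Proof plan.} The plan is to combine the density statement for Beppo--Levi spaces with the fact that the rescaled domains $\eps^{-1}\Dsf$ exhaust $\VR^d$ as $\eps\searrow 0$. Since $0\in\Dsf$ and $\Dsf$ is open, $\Dsf$ contains a ball $B_r$, so $\eps^{-1}\Dsf\supset B_{r/\eps}$; hence every compact subset of $\VR^d$ lies in $\eps^{-1}\Dsf$ for all sufficiently small $\eps$, and $\chi_{\VR^d\setminus\eps^{-1}\Dsf}\to 0$ pointwise as $\eps\searrow 0$. Fix $\delta>0$. Using that $C^\infty_c(\VR^d)/\VR$ is dense in $\dot{BL}_p(\VR^d)$, I would choose $\psi\in C^\infty_c(\VR^d)$ with $\|\nabla(\psi-K)\|_{L_p(\VR^d)^d}<\delta$; only gradients enter, so the ambiguity of $K$ up to an additive constant is irrelevant.

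Next I would split the global error into its parts inside and outside the rescaled domain. Because $P_\eps(K)\in W^1_{p,0}(\eps^{-1}\Dsf)$ is extended by zero, its gradient vanishes on $\VR^d\setminus\eps^{-1}\Dsf$, so
\[
\|\nabla(P_\eps(K))-\nabla K\|_{L_p(\VR^d)^d}^p = \|\nabla(P_\eps(K))-\nabla K\|_{L_p(\eps^{-1}\Dsf)^d}^p + \|\nabla K\|_{L_p(\VR^d\setminus\eps^{-1}\Dsf)^d}^p.
\]
For the first summand I would invoke the minimality in \eqref{E:hat_K_eps}: once $\eps$ is small enough that $\supp\psi\subset\eps^{-1}\Dsf$, the function $\psi$ is an admissible competitor in $W^1_{p,0}(\eps^{-1}\Dsf)$, whence
\[
\|\nabla(P_\eps(K))-\nabla K\|_{L_p(\eps^{-1}\Dsf)^d} \le \|\nabla(\psi-K)\|_{L_p(\eps^{-1}\Dsf)^d} \le \|\nabla(\psi-K)\|_{L_p(\VR^d)^d} < \delta.
\]

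Finally, the tail term $\|\nabla K\|_{L_p(\VR^d\setminus\eps^{-1}\Dsf)^d}$ tends to $0$ as $\eps\searrow 0$ by dominated convergence, since $\nabla K\in L_p(\VR^d)^d$ dominates the integrand while $\chi_{\VR^d\setminus\eps^{-1}\Dsf}\to 0$ almost everywhere. Combining the two bounds gives $\limsup_{\eps\searrow 0}\|\nabla(P_\eps(K))-\nabla K\|_{L_p(\VR^d)^d}^p \le \delta^p$, and letting $\delta\searrow 0$ yields the claim. The argument is essentially routine; the only points needing mild care are the admissibility of the fixed smooth competitor for small $\eps$ (exhaustion of $\VR^d$) and the vanishing of the exterior tail, which is precisely where the global integrability $\nabla K\in L_p(\VR^d)^d$ is genuinely used. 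I do not foresee a serious obstacle beyond this bookkeeping.
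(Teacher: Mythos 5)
Your proof is correct, but it takes a genuinely different and more elementary route than the paper. The paper's proof proceeds by functional-analytic compactness: it first establishes a uniform bound $\|\nabla(P_\eps(K))\|_{L_p(\VR^d)^d}\le C$ using a fixed competitor, extracts a weakly convergent subsequence with limit $\hat K$, identifies $\hat K = K$ via the minimality inequality, weak lower semicontinuity and the density of $C^\infty_c(\VR^d)/\VR$ in $\dot{BL}_p(\VR^d)$, and finally upgrades weak convergence plus convergence of norms to strong convergence by the Radon--Riesz theorem. You instead give a direct $\delta$-argument whose key structural ingredient the paper never uses: since $P_\eps(K)\in W^1_{p,0}(\eps^{-1}\Dsf)$ extended by zero has gradient vanishing a.e.\ outside $\eps^{-1}\Dsf$, the global error splits \emph{exactly} as
\[
\|\nabla(P_\eps(K)- K)\|_{L_p(\VR^d)^d}^p = \|\nabla(P_\eps(K)- K)\|_{L_p(\eps^{-1}\Dsf)^d}^p + \|\nabla K\|_{L_p(\VR^d\setminus\eps^{-1}\Dsf)^d}^p,
\]
after which the interior part is killed by minimality in \eqref{E:hat_K_eps} against a fixed smooth competitor (admissible for small $\eps$ because $\eps^{-1}\Dsf\supset B_{r/\eps}$ eventually contains its support), and the tail by dominated convergence. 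Your argument avoids subsequence extraction and Radon--Riesz entirely, and is even quasi-quantitative: the error is controlled by the density-approximation error plus the tail $\|\nabla K\|_{L_p(\VR^d\setminus\eps^{-1}\Dsf)^d}$. The paper's weak-compactness scheme is the more generic template (it would survive in situations where no exact additive splitting of the norm over $\eps^{-1}\Dsf$ and its complement is available), but for this particular projection both proofs rely on the same two pillars --- the minimality inequality and the density of $C^\infty_c(\VR^d)/\VR$ --- and yours assembles them more economically. One cosmetic omission: the paper's proof also spends a line justifying existence and uniqueness of the minimizer (strict convexity of $\varphi\mapsto\|\nabla(\varphi-K)\|^p_{L_p(\eps^{-1}\Dsf)^d}$); you implicitly take well-posedness of $P_\eps$ from Definition~\ref{D:projection}, which is defensible but worth a remark if your proof were to replace the paper's.
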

    \begin{proof}
        Since $\varphi \mapsto \|\nabla(\varphi-K)\|_{L_p(\eps^{-1}\Dsf)^d}^p$ is strictly convex on 
    $W^1_{p,0}(\eps^{-1}\Dsf)$ it follows that \eqref{E:hat_K_eps} admits a unique solution which is denoted by $P_\eps(K)$. We have by definition

 \ben\label{eq:inequality}
\| \nabla(P_\eps(K) - K)\|_{L_p(\eps^{-1}\Dsf)^d} \le \| \nabla(\varphi - K)\|_{L_p(\eps^{-1}\Dsf)^d} \quad \text{ for all } \varphi \in W^1_{p,0}(\eps^{-1}\Dsf). 
\een
Choosing a function $\varphi \in W^1_{p,0}(\eps^{-1}\Dsf) $ with fixed support in some compact set $K\subset \Dsf$, we see that we find $C>0$, such that $\|\nabla(P_\eps(K))\|_{L_p(\VR^d)^d}\le C$ for all $\eps \in (0,1)$. Now let $\tilde \eps >0$ be arbitrary. Let $(\eps_n)$ be a null-sequence, such that $\eps_n <\tilde \eps$ for all $n\geq 1$. We obtain from \eqref{eq:inequality} that for all $n\ge 1$:
\ben\label{eq:Peps_tilde}
\| \nabla(P_{\eps_n}(K) - K)\|_{L_p(\eps_n^{-1}\Dsf)^d} \le \| \nabla(\varphi - K)\|_{L_p(\eps_n^{-1}\Dsf)^d} \quad \text{ for all } \varphi \in W^1_{p,0}(\tilde \eps^{-1}\Dsf).
\een
Here we extended $\varphi$ to $\eps_n^{-1}\Dsf$ by zero. Since $P_{\eps_n}(K)$ is bounded in $\dot{BL}_p(\VR^d)$ we find a weakly converging subsequence (denoted the same) and an element $\hat K\in \dot{BL}_p(\VR^d)$, such that\\
$\liminf_{n\to \infty} \|\nabla(P_{\eps_n}(K))\|_{L_p(\VR^d)^d} \ge \|\nabla \hat K\|_{L_p(\VR^d)^d}.$ Hence it follows from \eqref{eq:Peps_tilde} that 
\ben\label{eq:Peps_tilde2}
\| \nabla(\hat K - K)\|_{L_p(\VR^d)^d} \le \| \nabla(\varphi - K)\|_{L_p(\VR^d)^d} \quad \text{ for all } \varphi \in W^1_{p,0}(\tilde \eps^{-1}\Dsf).
\een
But since $\tilde \eps >0$ was arbitrary and since $C^\infty_c(\VR^d)/\VR$ is dense in $\dot{BL}_p(\VR^d)$ it follows 
\ben\label{eq:Peps_tilde3}
\| \nabla(\hat K - K)\|_{L_p(\VR^d)^d} \le \liminf_{n\to \infty} \| \nabla(P_{\eps_n}(K) - K)\|_{L_p(\eps_n^{-1}\Dsf)^d} \le  \| \nabla(\varphi - K)\|_{L_p(\VR^d)^d}
\een
for all $\varphi \in \dot{BL}_p(\VR^d)$. However, by choosing $\varphi = K$, this implies $\hat K = K$. It follows in particular that 
$P_{\eps}(K) \rightharpoonup K$ weakly in $L_p(\VR^d)$ as $\eps \searrow 0$. In addition 
it follows from \eqref{eq:Peps_tilde3} the norm convergence of $\nabla (P_\eps(K))$ in $L_p(\VR^d)^d$. Hence by the theorem of Radon-Riesz (see \cite[p.264, Thm.5.10]{b_EL_1999a}) we have 
$\|\nabla (P_\eps(K) - K) \|_{L_p(\VR^d)^d} \to 0$ as $\eps\searrow 0$. 
\end{proof}
 We now let $\hat K_\eps:= P_\eps(K)\in H^1_0(\eps^{-1}\Dsf)$ be the solution to \eqref{E:hat_K_eps} with $p=2$. 

As for $K_\eps$, we can also view $H_\eps$ and $\hat K_\eps$ as elements of $BL(\VR^d)$ by extending them by $0$ outside $\eps^{-1}\Dsf$.

\begin{remark}

In  \cite{a_AMBO_2017a,AmstutzGangl2019} the proof of $\nabla K_\eps \to \nabla K$ strongly in $L_2(\VR^d)^d$ as $\eps \searrow 0$ was given using a cut-off argument of $K$. The reason is that one cannot directly work with $K$ since $K \not\in H^1_0(\eps^{-1}\Dsf)$ for every $\eps>0$. This cut-off technique lead to technical arguments which required additional smoothness of the operators, some restrictions on the non-linearity and also to restrict to $\omega=B_1(0)$. As we will see by introducing the projection $\hat K_\eps$ this step is simplified substantially.

\end{remark}

\begin{lemma}\label{L:Heps_K}
We have
\ben
\nabla H_\eps \to \nabla K \quad \text{ strongly in } L_2(\VR^d)^d \text{ as }  \eps \searrow 0. 
\een
    \end{lemma}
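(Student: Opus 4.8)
The plan is to recognise $H_\eps$ as the Galerkin approximation of $K$ on the subspace $H^1_0(\eps^{-1}\Dsf)$ and to bound the resulting error by the best-approximation error, which Lemma~\ref{L:projection_convergence} already controls through the projection $\hat K_\eps := P_\eps(K)$. The key observation is that $K$ solves \eqref{E:limit_K} against \emph{every} test function $\varphi \in BL(\VR^d)$, whereas $H_\eps$ solves \eqref{E:approx_Keps} only against $\varphi \in H^1_0(\eps^{-1}\Dsf)$. Since any such $\varphi$, extended by zero, lies in $BL(\VR^d)$ and the two equations have the \emph{identical} right-hand side $-\int_\omega (a_1(U_0)-a_2(U_0))\cdot\nabla\varphi\,dx$, subtracting them on the common test space yields, with the strongly monotone and Lipschitz operator $B_\omega$ from the proof of part (i), the Galerkin orthogonality
\benn
\langle B_\omega H_\eps - B_\omega K, \varphi\rangle = 0 \quad \text{ for all } \varphi \in H^1_0(\eps^{-1}\Dsf).
\eenn

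Next I would test this identity with $\varphi := H_\eps - \hat K_\eps \in H^1_0(\eps^{-1}\Dsf)$ and split $\langle B_\omega H_\eps - B_\omega \hat K_\eps, H_\eps - \hat K_\eps\rangle$ into $\langle B_\omega H_\eps - B_\omega K, H_\eps - \hat K_\eps\rangle + \langle B_\omega K - B_\omega \hat K_\eps, H_\eps - \hat K_\eps\rangle$. The first summand vanishes by the orthogonality above. Applying the strong monotonicity of $B_\omega$ (with constant $c_1$, from Assumption~\ref{A:nonlinearity}(i)) on the left-hand side and its Lipschitz continuity (with constant $c_2$, from Assumption~\ref{A:nonlinearity}(ii)) together with the Cauchy--Schwarz inequality on the remaining second summand, one obtains the nonlinear C\'ea estimate
\benn
c_1 \|\nabla(H_\eps - \hat K_\eps)\|_{L_2(\VR^d)^d}^2 \le c_2\, \|\nabla(K - \hat K_\eps)\|_{L_2(\VR^d)^d}\, \|\nabla(H_\eps - \hat K_\eps)\|_{L_2(\VR^d)^d},
\eenn
whence $\|\nabla(H_\eps - \hat K_\eps)\|_{L_2(\VR^d)^d} \le (c_2/c_1)\,\|\nabla(K - \hat K_\eps)\|_{L_2(\VR^d)^d}$. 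It is essential here that $c_1$ and $c_2$ are independent of $\eps$, so that this bound is uniform in $\eps$.

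Finally, Lemma~\ref{L:projection_convergence} with $p=2$ gives $\|\nabla(K - \hat K_\eps)\|_{L_2(\VR^d)^d} \to 0$ as $\eps \searrow 0$, hence $\|\nabla(H_\eps - \hat K_\eps)\|_{L_2(\VR^d)^d} \to 0$ as well, and the triangle inequality $\|\nabla(H_\eps - K)\| \le \|\nabla(H_\eps - \hat K_\eps)\| + \|\nabla(\hat K_\eps - K)\|$ (all norms in $L_2(\VR^d)^d$) closes the argument. The step I expect to require the most care is the justification of the Galerkin orthogonality, namely that $K$'s equation \eqref{E:limit_K} may legitimately be tested against the zero-extension of an arbitrary $\varphi \in H^1_0(\eps^{-1}\Dsf)$ (so that $K$ and $H_\eps$ genuinely solve the same identity on that subspace); once this is secured, the remainder is the standard best-approximation argument for strongly monotone, Lipschitz operators, and the entire convergence is delivered by Lemma~\ref{L:projection_convergence}, which is precisely why introducing the projection $\hat K_\eps$ replaces the delicate cut-off estimates of \cite{a_AMBO_2017a, AmstutzGangl2019}.
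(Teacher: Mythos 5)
Your proposal is correct and takes essentially the same route as the paper: your Galerkin orthogonality is exactly the paper's identity \eqref{E:rewrite_first_approx} written in operator form, and your C\'ea-type estimate (strong monotonicity on the left, Lipschitz continuity plus Cauchy--Schwarz on the right, tested with $H_\eps - \hat K_\eps$) reproduces the paper's chain of inequalities \eqref{E:strong_Keps_Heps}. The conclusion via Lemma~\ref{L:projection_convergence} and the triangle inequality is likewise identical, so there is nothing to add.
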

    \begin{proof}
Subtracting \eqref{E:approx_Keps} from \eqref{E:limit_K} yields after rearranging:
\ben\label{E:rewrite_first_approx}
    \int_{\VR^d}   (\Ca_\omega(x,\nabla \hat K_\eps + U_0 ) - \Ca_\omega(x,\nabla H_\eps + U_0 ))\cdot \nabla \varphi \; dx
                   = \int_{\VR^d}   (\Ca_\omega(x,\nabla \hat K_\eps + U_0 ) - \Ca_\omega(x, \nabla K + U_0 ))\cdot \nabla \varphi \; dx
\een
for all $\varphi \in H^1_0(\eps^{-1} \Dsf)$. Now we test this equation with $\varphi = \hat K_\eps - H_\eps \in H^1_0(\eps^{-1}\Dsf)$, use the monotonicity of $\Ca_\omega$ and H\"older's inequality:
\ben\label{E:strong_Keps_Heps}
\begin{split}
    C\|\nabla(\hat K_\eps - H_\eps) \|_{L_2(\VR^d)^d}^2 &\le 
    \int_{\VR^d}   (\Ca_\omega(x,\nabla \hat K_\eps + U_0 ) - \Ca_\omega(x,\nabla H_\eps + U_0 ))\cdot \nabla (\hat K_\eps - H_\eps) \; dx
                  \\
                                                       & \stackrel{\eqref{E:rewrite_first_approx}}{=} \int_{\VR^d}   (\Ca_\omega(x,\nabla \hat K_\eps + U_0 ) - \Ca_\omega(x, \nabla  K + U_0 ))\cdot \nabla (\hat K_\eps - H_\eps) \; dx\\
                                              & \le\int_{\VR^d} |\nabla (\hat K_\eps - K)||\nabla(\hat K_\eps - H_\eps)|\;dx \\
                                              & \le \|\nabla(\hat K_\eps - K) \|_{L_2(\VR^d)^d} \|\nabla(\hat K_\eps - H_\eps) \|_{L_2(\VR^d)^d}.
\end{split}
\een
Since  in view of Lemma~\ref{L:projection_convergence}, we have $\nabla\hat K_\eps \to \nabla K$ strongly in $L_2(\VR^d)^d$ it follows from \eqref{E:strong_Keps_Heps} that  $\nabla (\hat K_\eps - H_\eps) \to 0$ strongly in $L_2(\VR^d)^d$ and therefore also $\|\nabla(H_\eps - K)\|_{L_2(\VR^d)^d} \le \|\nabla(H_\eps - \hat K_\eps)\|_{L_2(\VR^d)^d} + \|\nabla(\hat K_\eps - K)\|_{L_2(\VR^d)^d} \to 0$ as $\eps \searrow 0$.
    \end{proof}

    We now prove that $\nabla (H_\eps - K_\eps) \to 0$ strongly in $L_2(\VR^d)^d$.
    \begin{lemma}\label{L:Heps_Keps}
We have
\ben
\nabla (H_\eps - K_\eps) \to 0 \quad \text{ strongly in }  L_2(\VR^d)^d \; \text{ as } \eps \searrow 0.
\een
    \end{lemma}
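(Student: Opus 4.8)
The plan is to compare $H_\eps$ and $K_\eps$ directly through their defining equations \eqref{E:approx_Keps} and \eqref{E:per_eps}, which differ only in that the constant shift $U_0$ is replaced by the variable shift $\nabla u_0(x_\eps)$. Writing $w_\eps(x) := \nabla u_0(x_\eps)$ and subtracting \eqref{E:approx_Keps} from \eqref{E:per_eps}, I would add and subtract $\Ca_\omega(x,\nabla H_\eps + w_\eps)$ so as to isolate the genuinely monotone term $\Ca_\omega(x,\nabla K_\eps + w_\eps) - \Ca_\omega(x,\nabla H_\eps + w_\eps)$ plus a remainder. Testing the resulting identity with $\varphi = K_\eps - H_\eps \in H^1_0(\eps^{-1}\Dsf)$ and invoking the strong monotonicity of Assumption~\ref{A:nonlinearity}(i) gives
\[
c_1 \|\nabla(K_\eps - H_\eps)\|_{L_2(\VR^d)^d}^2 \le \int_{\VR^d} |E_\eps|\,|\nabla(K_\eps - H_\eps)|\;dx + \int_\omega |r_\eps|\,|\nabla(K_\eps - H_\eps)|\;dx,
\]
where $r_\eps := a_1(w_\eps) - a_2(w_\eps) - a_1(U_0) + a_2(U_0)$ is the source mismatch on $\omega$ and $E_\eps := \Ca_\omega(x,\nabla H_\eps + w_\eps) - \Ca_\omega(x,w_\eps) - \Ca_\omega(x,\nabla H_\eps + U_0) + \Ca_\omega(x,U_0)$ is the operator-shift remainder. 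After dividing by $\|\nabla(K_\eps - H_\eps)\|_{L_2(\VR^d)^d}$ it then suffices to show that both $\|r_\eps\|_{L_2(\omega)}$ and $\|E_\eps\|_{L_2(\VR^d)}$ tend to zero.

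For the source term I would use the Lipschitz bound Assumption~\ref{A:nonlinearity}(ii) together with the assumed regularity $u_0 \in C^{1,\alpha}(\overline{B_\delta(z)})$: since $z=0$ and $\omega$ is bounded, $|w_\eps(x) - U_0| = |\nabla u_0(\eps x) - \nabla u_0(0)| \le C(\eps|x|)^\alpha$ for $x\in\omega$ and $\eps$ small, whence $\|r_\eps\|_{L_2(\omega)} = O(\eps^\alpha)$. For the operator-shift remainder I would exploit Assumption~\ref{A:nonlinearity}(iii): writing, on each of the regions $\omega$ and $\VR^d\setminus\omega$, $E_\eps = \int_0^1 [\partial a_i(w_\eps + \theta \nabla H_\eps) - \partial a_i(U_0 + \theta\nabla H_\eps)](\nabla H_\eps)\,d\theta$, the $C^1$-Lipschitz property yields the pointwise bound $|E_\eps| \le c_3\, |w_\eps - U_0|\,|\nabla H_\eps|$. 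By Cauchy--Schwarz the whole estimate is thus reduced to proving
\[
\int_{\VR^d} |\nabla u_0(x_\eps) - U_0|^2\,|\nabla H_\eps|^2\;dx \longrightarrow 0 \quad\text{as } \eps\searrow 0.
\]

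This weighted integral is the heart of the matter and the step I expect to be the main obstacle, since the weight $\nabla u_0(x_\eps) - U_0$ does \emph{not} decay at spatial infinity. The idea is to play the smallness of the weight near the origin against the \emph{tightness} of the energy density $|\nabla H_\eps|^2$, the latter being available precisely because Lemma~\ref{L:Heps_K} provides \emph{strong} (not merely weak) convergence $\nabla H_\eps \to \nabla K$ in $L_2(\VR^d)^d$. Concretely, given $\eta>0$ I would first use $\int_{|x|>R}|\nabla H_\eps|^2 \le 2\int_{|x|>R}|\nabla K|^2 + 2\|\nabla H_\eps - \nabla K\|_{L_2(\VR^d)^d}^2$ together with $\nabla K\in L_2(\VR^d)$ to fix $R$ large and then $\eps$ small so that this tail is below $\eta$; on the ball $B_R$, Hölder continuity of $\nabla u_0$ near $0$ gives $|\nabla u_0(x_\eps) - U_0| \le C(\eps R)^\alpha$, so the central contribution is bounded by $C(\eps R)^{2\alpha}\|\nabla H_\eps\|_{L_2(\VR^d)^d}^2 \to 0$, while on $\VR^d\setminus B_R$ the weight stays bounded (here one uses that $\nabla u_0$ is bounded on $\Dsf$, which holds by the piecewise $C^{1,\alpha}$-regularity of the transmission solution) and is multiplied by the small tail. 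A standard $\eps$--$R$ diagonal argument then closes the estimate, and combining it with the $O(\eps^\alpha)$ source bound yields $\nabla(H_\eps - K_\eps)\to 0$ in $L_2(\VR^d)^d$. It is worth stressing that this tightness argument is exactly what replaces the delicate cut-off and explicit decay analysis of \cite{a_AMBO_2017a, AmstutzGangl2019}.
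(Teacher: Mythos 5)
Your overall strategy --- subtracting \eqref{E:per_eps} and \eqref{E:approx_Keps}, testing with $K_\eps-H_\eps$, invoking strong monotonicity, isolating a source mismatch $r_\eps$ and an operator-shift remainder $E_\eps$, and then playing H\"older-smallness of $\nabla u_0(x_\eps)-U_0$ near the origin against the tightness of $\nabla H_\eps$ supplied by Lemma~\ref{L:Heps_K} --- is the same as the paper's, and your source estimate $\|r_\eps\|_{L_2(\omega)}=O(\eps^\alpha)$ is correct. However, there is a genuine gap in your treatment of the far region. Your mean-value bound $|E_\eps|\le c_3\,|w_\eps-U_0|\,|\nabla H_\eps|$, with $w_\eps=\nabla u_0(x_\eps)$ (which, incidentally, invokes Assumption~\ref{A:nonlinearity}(iii), whereas Theorem~\ref{T:Keps_strong_K} and its proof in the paper use only (i),(ii)), forces you to control the weighted integral $\int_{\VR^d}|w_\eps-U_0|^2|\nabla H_\eps|^2\,dx$, and on $\VR^d\setminus B_R$ you need the weight $|w_\eps-U_0|$ to stay bounded. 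You justify this by ``piecewise $C^{1,\alpha}$-regularity of the transmission solution'', but no such global regularity is assumed or available: the paper assumes $u_0\in H^1_0(\Dsf)$ together with $u_0\in C^{1,\alpha}(\overline{B_\delta(z)})$ only \emph{locally near} $z$, and $\Omega$ is an arbitrary open set with no boundary smoothness, so nothing yields $\nabla u_0\in L_\infty(\Dsf)^d$. Without that bound the estimate cannot be closed: $\nabla u_0\circ T_\eps$ is merely $L_2$ on $\eps^{-1}\Dsf$ (with norm of order $\eps^{-d/2}$), and an $L_2$ weight against the $L_1$ density $|\nabla H_\eps|^2$ gives nothing.

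The fix is exactly the regrouping trick the paper uses, and it removes both the need for a global bound on $\nabla u_0$ and the use of Assumption (iii). With Assumption (ii) alone one has two alternative pointwise bounds for your $E_\eps$: grouping it as $\left[\Ca_\omega(x,\nabla H_\eps+w_\eps)-\Ca_\omega(x,\nabla H_\eps+U_0)\right]-\left[\Ca_\omega(x,w_\eps)-\Ca_\omega(x,U_0)\right]$ gives $|E_\eps|\le 2c_2|w_\eps-U_0|$, while grouping it as $\left[\Ca_\omega(x,\nabla H_\eps+w_\eps)-\Ca_\omega(x,w_\eps)\right]-\left[\Ca_\omega(x,\nabla H_\eps+U_0)-\Ca_\omega(x,U_0)\right]$ gives $|E_\eps|\le 2c_2|\nabla H_\eps|$. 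The paper uses the first bound on an expanding ball $B_{\eps^{-r}}$, $r\in(0,1)$, where the H\"older estimate legitimately applies because $|x_\eps|\le\eps^{1-r}<\delta$ for small $\eps$, and the second bound on $\VR^d\setminus B_{\eps^{-r}}$, where the weight never appears and the tail $\|\nabla H_\eps\|_{L_2(\VR^d\setminus B_{\eps^{-r}})^d}\to 0$ follows from Lemma~\ref{L:Heps_K} and $\nabla K\in L_2(\VR^d)^d$. If you replace your product bound by this pair of bounds (using the first near the origin and the second at infinity), your fixed-$R$-then-$\eps$ diagonal argument goes through essentially verbatim, and the remainder of your proof (the monotonicity step and the $O(\eps^\alpha)$ source term) is correct.
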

    \begin{proof}
        Subtracting \eqref{E:per_eps} and \eqref{E:approx_Keps} we obtain
\ben
\begin{split}
    \int_{\VR^d} &  (\Ca_\omega(x,\nabla K_\eps + \nabla u_0(x_\eps) ) - \Ca_\omega(x,\nabla H_\eps + U_0 ))\cdot \nabla \varphi \; dx \\
                 & + \int_{\VR^d}   (\Ca_\omega(x,U_0 ) - \Ca_\omega(x,\nabla u_0(x_\eps) ))\cdot \nabla \varphi \; dx \\
                &  = - \int_{\omega}(a_1 (\nabla u_0 (x_\eps) ) - a_2 (\nabla u_0(x_\eps) )) \cdot \nabla \varphi  + (a_1 (U_0 ) - a_2 (U_0 )) \cdot \nabla \varphi \;dx
\end{split}
\een
for all $\varphi \in H^1_0(\eps^{-1} \Dsf)$. In order to be able to use the monotonicity of $\Ca_\omega$ we rewrite this as follows 
\ben\label{E:diff_Heps_Keps}
\begin{split}
    \int_{\VR^d} &  (\Ca_\omega(x,\nabla K_\eps + \nabla u_0(x_\eps) ) - \Ca_\omega(x,\nabla H_\eps + \nabla u_0(x_\eps) ) ))\cdot \nabla \varphi \; dx \\
    = &  \underbrace{  - \int_{\VR^d}  ( (\Ca_\omega(x,\nabla H_\eps + \nabla u_0(x_\eps) ) - (\Ca_\omega(x,\nabla H_\eps + U_0 )\cdot \nabla \varphi \; dx}_{=:I_1(\eps,\varphi)} \\
                 & \underbrace{- \int_{\VR^d}   (\Ca_\omega(x,U_0 ) - \Ca_\omega(x,\nabla u_0(x_\eps) ))\cdot \nabla \varphi \; dx}_{=:I_2(\eps,\varphi)} \\
                 &  \underbrace{ - \int_{\omega}(a_1 (\nabla u_0 (x_\eps) ) - a_2 (\nabla u_0(x_\eps) )) \cdot \nabla \varphi \;dx  + (a_1 (U_0 ) - a_2 (U_0 )) \cdot \nabla \varphi \;dx}_{=:I_3(\eps,\varphi)}.
\end{split}
\een
Since $a_i$ are Lipschitz continuous and $u_0 \in C^{1,\alpha}(\overbar{B_\delta(z)})$ with $\alpha,\delta>0$, we immediately obtain that $|I_3(\eps,\varphi)| \le C\eps^\alpha \|\nabla \varphi\|_{L_2(\VR^d)^d} $ for a suitable constant $C>0$. We now show that also $|I_1(\eps,\varphi)+I_2(\eps,\varphi)|\le C(\eps)\|\nabla \varphi\|_{L_2(\VR^d)^d}$ and $C(\eps)\to 0$ as $\eps \searrow 0$. We write for  arbitrary $r\in (0,1$),
\ben\label{E:sum_I1_I2}
\begin{split}
    I_1(\eps,\varphi)+I_2(\eps,\varphi)  = &  - \int_{B_{\eps^{-r}}}  ( (\Ca_\omega(x,\nabla H_\eps + \nabla u_0(x_\eps) ) - (\Ca_\omega(x,\nabla H_\eps + U_0 )\cdot \nabla \varphi \; dx \\
                                           & - \int_{B_{\eps^{-r}}}   (\Ca_\omega(x,U_0 ) - \Ca_\omega(x,\nabla u_0(x_\eps) ))\cdot \nabla \varphi \; dx \\
                                           &  - \int_{\VR^d\setminus B_{\eps^{-r}}}  ( (\Ca_\omega(x,\nabla H_\eps + \nabla u_0(x_\eps) ) - (\Ca_\omega(x, \nabla u_0(x_\eps) )\cdot \nabla \varphi \; dx \\
                                           &  + \int_{\VR^d\setminus B_{\eps^{-r}}}   ((\Ca_\omega(x,\nabla H_\eps + U_0 ) - \Ca_\omega(x, U_0 ))\cdot \nabla \varphi \; dx.
\end{split}
\een
As in \cite[Prop. 6.7]{a_AMBO_2017a} the idea of choosing a power $\eps^{-r}$ is to let the ball $B_{\eps^{-r}}(0)$ expand slower than $B_{\eps^{-1}}(0)$ by choosing $r\in (0,1)$ appropriately.
Now we can estimate the right hand side of \eqref{E:sum_I1_I2} using the Lipschitz continuity of $a_i$ (see Assumption~\ref{A:nonlinearity}(ii)) as follows
\ben
\begin{split}
    |I_1(\eps,\varphi) + I_2(\eps,\varphi)| & \le  2C\int_{B_{\eps^{-r}}} |U_0-\nabla u_0(x_\eps)||\nabla \varphi| \;dx   
                                                   + 2C\int_{\VR^d\setminus B_{\eps^{-r}}}|\nabla H_\eps ||\nabla \varphi|\;dx \\
                                        &\leq C \int_{B_{\eps^{-r}}} |x_\eps|^\alpha |\nabla \varphi| \mbox dx           + 2C\int_{\VR^d\setminus B_{\eps^{-r}}}|\nabla H_\eps ||\nabla \varphi|\;dx \\                                        
                                                 & \le \eps^{-r \alpha} \eps^\alpha \eps^{-rd/2} C  \|\nabla \varphi\|_{L_2(\VR^d)^d} + 2C \|\nabla H_\eps\|_{L_2(\VR^d\setminus B_{\eps^{-r}})^d}\|\nabla \varphi\|_{L_2(\VR^d\setminus B_{\eps^r})^d}
\end{split}
\een
For $r$ sufficiently close to $0$, we have $\eps^{-r\alpha} \eps^\alpha \eps^{-rd/2} = \eps^{\alpha - r(\frac{d}{2} +\alpha)} \to 0$. Moreover, by the triangle inequality we have 
\ben
\|\nabla H_\eps \|_{L_2(\VR^d\setminus B_{\eps^{-r}})} \le \|\nabla (H_\eps- K) \|_{L_2(\VR^d\setminus B_{\eps^{-r}})} + \|\nabla K \|_{L_2(\VR^d\setminus B_{\eps^{-r}})}.
\een
The first term on the right hand side goes to zero in view of  Lemma~\ref{L:Heps_K}. The second
term goes to zero since $\nabla K\in L_2(\VR^d)^d$ thus $\|\nabla K\|_{L_2(\VR^d\setminus B_{\eps^{-r}})^d} \to 0$ as $\eps \searrow 0$. Using $K_\eps - H_\eps$ as test function in \eqref{E:diff_Heps_Keps}, using the monotonicity of $\Ca$ and employing $|I_1(\eps,\varphi)+I_2(\eps,\varphi)+I_3(\eps,\varphi)| \le C(\eps)\|\nabla \varphi\|_{L_2(\VR^d)^d}$ with $C(\eps) \to 0$ as $\eps \searrow 0$, shows the result.
    \end{proof}
    Combining Lemma~\ref{L:Heps_K} and Lemma~\ref{L:Heps_Keps} proves Theorem~\ref{T:Keps_strong_K}(ii). \hfill $square$

    We get the following properties of the sequence $(\eps K_\eps)$:
    \begin{corollary}\label{cor:epsK}
        We have
        \ben
 \eps K_\eps \to 0  \quad \left\{ \begin{array}{lll}
         \text{ strongly in } L_p(\VR^d) &\quad \text{ for } d=2, &\; p\in (2,4],\\
         \text{ strongly in } L_{p}(\VR^d) &\quad \text{ for } d\ge 3, &\; p\in (2,2^*],\\
         \text{ weakly in } L_2(\VR^d) & \quad \text{ for } d\ge 2, &
 \end{array} \right. 
        \een
        where $2^*:= 2d/(d-2)$ denotes the Sobolev exponent of $2$ for $d\ge 3$. 
    \end{corollary}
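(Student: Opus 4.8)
The plan is to reduce everything to two elementary scaling facts about the rescaled function $\eps K_\eps$ and then invoke the appropriate Sobolev-type inequality in each dimension. First I would record that, by the definition of $K_\eps$, one has $\eps K_\eps = (u_\eps - u_0)\circ T_\eps$, which—after extending $u_\eps$ and $u_0$ by zero outside $\Dsf$—belongs to $H^1_0(\eps^{-1}\Dsf)$ and is in particular a compactly supported element of $H^1(\VR^d)$. The chain rule gives $\nabla(\eps K_\eps) = \eps\,\nabla K_\eps$, so that, using the uniform bound $\|\nabla K_\eps\|_{L_2(\VR^d)}\le C$ (which follows from Lemma~\ref{lem:u_ueps} by the change of variables $y=T_\eps(x)=\eps x$),
\ben
\|\nabla(\eps K_\eps)\|_{L_2(\VR^d)} = \eps\,\|\nabla K_\eps\|_{L_2(\VR^d)} \le C\eps \to 0.
\een
The same change of variables together with Lemma~\ref{lem:u_ueps} yields
\ben
\|\eps K_\eps\|_{L_2(\VR^d)} = \eps^{-d/2}\,\|u_\eps - u_0\|_{L_2(\Dsf)} \le \eps^{-d/2}\,\|u_\eps-u_0\|_{H^1(\Dsf)} \le C,
\een
so that $(\eps K_\eps)$ is bounded in $L_2(\VR^d)$.

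For $d\ge 3$ I would apply the Gagliardo--Nirenberg--Sobolev inequality $\|v\|_{L_{2^*}(\VR^d)}\le C\|\nabla v\|_{L_2(\VR^d)}$, valid for compactly supported $H^1$-functions, to $v = \eps K_\eps$. This gives $\|\eps K_\eps\|_{L_{2^*}(\VR^d)}\le C\eps \to 0$, which is the endpoint $p=2^*$. For the remaining exponents $p\in(2,2^*)$ I would interpolate between the bounded $L_2$-norm and the vanishing $L_{2^*}$-norm: writing $\tfrac1p = \tfrac\theta2 + \tfrac{1-\theta}{2^*}$ with $\theta\in(0,1)$,
\ben
\|\eps K_\eps\|_{L_p(\VR^d)} \le \|\eps K_\eps\|_{L_2(\VR^d)}^{\theta}\,\|\eps K_\eps\|_{L_{2^*}(\VR^d)}^{1-\theta} \le C^\theta (C\eps)^{1-\theta}\to 0.
\een

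For $d=2$ the homogeneous Sobolev inequality with the gradient alone is unavailable, and this dimensional dichotomy is the only genuine obstacle: one must instead combine the $L_2$-bound with the vanishing gradient through the multiplicative Gagliardo--Nirenberg (Ladyzhenskaya) inequality $\|v\|_{L_4(\VR^2)}\le C\|v\|_{L_2(\VR^2)}^{1/2}\|\nabla v\|_{L_2(\VR^2)}^{1/2}$. Applied to $v = \eps K_\eps$ it gives $\|\eps K_\eps\|_{L_4(\VR^2)}\le C\,C^{1/2}(C\eps)^{1/2}\to 0$, i.e.\ the endpoint $p=4$, and interpolation with the $L_2$-bound as above then covers all $p\in(2,4)$. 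Finally, for the weak convergence in $L_2$ (every $d\ge 2$): the sequence is bounded in $L_2(\VR^d)$, and for any $\varphi\in C^\infty_c(\VR^d)$ Hölder's inequality gives $|\int_{\VR^d}\eps K_\eps\,\varphi\;dx|\le \|\eps K_\eps\|_{L_p(\VR^d)}\|\varphi\|_{L_{p'}(\VR^d)}\to 0$ for the exponent $p>2$ just treated; since $C^\infty_c(\VR^d)$ is dense in $L_2(\VR^d)$ and $(\eps K_\eps)$ is $L_2$-bounded, a standard density argument upgrades this to $\eps K_\eps\rightharpoonup 0$ weakly in $L_2(\VR^d)$. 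Everything apart from the dimensional split is pure scaling and interpolation.
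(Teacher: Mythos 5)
Your proof is correct and follows essentially the same route as the paper: the scaling bounds $\|\eps K_\eps\|_{L_2(\VR^d)}\le C$ and $\|\nabla(\eps K_\eps)\|_{L_2(\VR^d)^d}\le C\eps$ obtained from Lemma~\ref{lem:u_ueps}, the Ladyzhenskaya inequality for $d=2$, the Sobolev (Gagliardo--Nirenberg) inequality for $d\ge 3$, and interpolation with the bounded $L_2$-norm for the intermediate exponents. The only point where you deviate is the weak $L_2$ convergence, which the paper delegates to a citation of \cite[Thm. 4.14]{Sturm2019}, whereas you give a short self-contained argument (uniform $L_2$-bound plus vanishing of the pairing against the dense class $C^\infty_c(\VR^d)$ via the already-established strong $L_p$ convergence); this argument is valid and makes the proof self-contained.
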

    \begin{proof}
        Let $d=2$. From the Ladyzhenskaya inequality (see \cite{a_LA_1959a}) we obtain the estimate $\|\eps K_\eps\|_{L_4(\VR^2)} \le C\eps^{1/2} \|\eps K_\eps\|_{L_2(\VR^2)}^{1/2}\|\nabla K_\eps\|_{L_2(\VR^2)^2}^{1/2}$. Hence for $d=2$, we conclude $\eps K_\eps \to 0$ in $L_4(\VR^2)$ as  $\eps\searrow 0$. 
        Let now $p\in(2,4)$. Then in view of the interpolation inequality $\| \eps K_\eps\|_{L_p(\VR^2)} \le \|\eps K_\eps \|_{L_2(\VR^2)}^\theta \|\eps K_\eps\|_{L_4(\VR^2)}^{1-\theta}$ for all $\theta\in (0,1)$ and $\frac{1}{p} = \frac{\theta}{2} + \frac{(1-\theta)}{4}$ it follows $\eps K_\eps \to 0$ in $L_p(\VR^2)$ as $\eps \searrow 0$. 

    Let now $d\ge 3$. By the Gagliardo-Nirenberg inequality (see \cite{a_NI_1959a}) we obtain $\|\eps K_\eps \|_{L_{2*}(\VR^d)} \le C\eps  \|\nabla K_\eps \|_{L_2(\VR^d)^d}$ and hence  $\eps K_\eps \to 0$ strongly in $L_{2^*}(\VR^d)$ as $\eps \searrow 0$. The convergence $\eps K_\eps\to 0$ strongly in  $L_p(\VR^d)$ as $\eps\searrow 0$ for all $p\in (2,2^*)$ follows by the interpolation argument as in the previous step.

    The convergence $\eps K_\eps \tow 0$ weakly in $L_2(\VR^d)$ as $\eps \searrow 0$ can be proved using the same arguments as in \cite[Thm. 4.14]{Sturm2019}.

\end{proof}

\ifpreprint
\begin{corollary}
We find for 
        every $d\ge 2$ and every $r\in (0,1)$ a constant $C>0$, such that for all small $\eps>0$,
        \ben\label{E:ball_comp_conv}
        \|\eps K_\eps \|_{L_2(B_{\eps^{-r}})} \le \left\{
            \begin{array}{ll}
                C \eps^{\frac{1}{2}(1-r)} & \text{ for } d=2,\\
                C \eps^{1-r} & \text{ for } d\ge 3
            \end{array}\right..
        \een
Moreover, for every $d\ge 2$, we have
\ben\label{E:convergence_K_Keps}
\|\eps(K_\eps - H_\eps)\|_{L_2(\eps^{-1}\Dsf)} \to 0 \quad \text{ as } \eps\searrow 0. 
\een
\end{corollary}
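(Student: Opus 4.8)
The plan is to establish the two assertions separately: I would derive \eqref{E:ball_comp_conv} from the improved integrability of $\eps K_\eps$ recorded in Corollary~\ref{cor:epsK} combined with H\"older's inequality on the expanding ball $B_{\eps^{-r}}$, and derive \eqref{E:convergence_K_Keps} from a scaled Poincar\'e--Friedrichs inequality on $\eps^{-1}\Dsf$ together with the gradient convergence of Lemma~\ref{L:Heps_Keps}. The point in both cases is that the relevant decay/cancellation rates are already encoded in the previous results; only an appropriate interpolation against the measure of the domain is needed.

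For the first bound, recall that Corollary~\ref{cor:epsK} supplies a quantitative rate in the improved Lebesgue norm. In the case $d\ge 3$, the Gagliardo--Nirenberg (Sobolev) inequality together with the uniform bound $\|\nabla K_\eps\|_{L_2(\VR^d)^d}\le C$ gives $\|\eps K_\eps\|_{L_{2^*}(\VR^d)} \le C\eps \|\nabla K_\eps\|_{L_2(\VR^d)^d} \le C\eps$. Applying H\"older's inequality on $A:=B_{\eps^{-r}}$ with exponents $2^*/2$ and its conjugate yields $\|\eps K_\eps\|_{L_2(A)} \le \|\eps K_\eps\|_{L_{2^*}(A)} |A|^{1/2 - 1/2^*}$; since $1/2-1/2^* = 1/d$ and $|A| = c_d\,\eps^{-rd}$, one has $|A|^{1/d} = C\eps^{-r}$ and hence $\|\eps K_\eps\|_{L_2(B_{\eps^{-r}})} \le C\eps^{1-r}$. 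For $d=2$ I would instead use the Ladyzhenskaya bound already appearing in the proof of Corollary~\ref{cor:epsK}, namely $\|\eps K_\eps\|_{L_4(\VR^2)} \le C\eps^{1/2}\|\eps K_\eps\|_{L_2(\VR^2)}^{1/2}\|\nabla K_\eps\|_{L_2(\VR^2)^2}^{1/2}$. Because $\eps K_\eps \tow 0$ weakly in $L_2(\VR^2)$ the sequence is bounded there, so $\|\eps K_\eps\|_{L_4(\VR^2)}\le C\eps^{1/2}$; H\"older on $A=B_{\eps^{-r}}$ then gives $\|\eps K_\eps\|_{L_2(A)} \le \|\eps K_\eps\|_{L_4(A)}|A|^{1/4} \le C\eps^{1/2}\eps^{-r/2} = C\eps^{\frac{1}{2}(1-r)}$, as claimed.

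For the second assertion, the key observation is that $K_\eps - H_\eps \in H^1_0(\eps^{-1}\Dsf)$, so a Poincar\'e--Friedrichs inequality applies on the dilated domain. Since $\Dsf$ is bounded, say $\Dsf \subset B_{R_0}$, the scaled domain satisfies $\eps^{-1}\Dsf \subset B_{\eps^{-1}R_0}$ and is thus contained in a slab of width $2\eps^{-1}R_0$; the Poincar\'e constant for $H^1_0$ functions on such a set is bounded by $C\eps^{-1}$. Consequently $\|K_\eps - H_\eps\|_{L_2(\eps^{-1}\Dsf)} \le C\eps^{-1}\|\nabla(K_\eps - H_\eps)\|_{L_2(\eps^{-1}\Dsf)^d}$, and multiplying by $\eps$ cancels the factor $\eps^{-1}$ exactly, giving $\|\eps(K_\eps - H_\eps)\|_{L_2(\eps^{-1}\Dsf)} \le C\|\nabla(K_\eps - H_\eps)\|_{L_2(\VR^d)^d}$. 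The right-hand side tends to $0$ by Lemma~\ref{L:Heps_Keps}, which proves \eqref{E:convergence_K_Keps}.

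The only delicate point is the first estimate, where one must balance the decay rate furnished by the Sobolev/Ladyzhenskaya inequality against the growth of $|B_{\eps^{-r}}|$ under the relevant H\"older exponent; the restriction $r\in(0,1)$ is precisely what guarantees that the resulting exponent $1-r$ (respectively $\tfrac12(1-r)$) remains positive, so that the bound genuinely decays. The second estimate, by contrast, is essentially immediate once one notices the exact cancellation between the $O(\eps^{-1})$ Poincar\'e constant on the dilated domain $\eps^{-1}\Dsf$ and the prefactor $\eps$.
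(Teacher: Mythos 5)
Your proposal is correct, and both estimates hold with exactly the claimed rates; however, your route differs from the paper's in both parts. For \eqref{E:ball_comp_conv}, the paper splits $\eps K_\eps$ on $B_{\eps^{-r}}$ into its oscillation and its mean: it applies the Poincar\'e inequality with mean value, $\|\eps K_\eps - (\eps K_\eps)_R\|_{L_2(B_R)} \le C R \eps$, and then separately estimates the average $(\eps K_\eps)_R$ via H\"older together with the Ladyzhenskaya/Gagliardo--Nirenberg bounds, recombining by the (reverse) triangle inequality with $R=\eps^{-r}$. You bypass the mean--oscillation split entirely: you first record the global bounds $\|\eps K_\eps\|_{L_4(\VR^2)}\le C\eps^{1/2}$ and $\|\eps K_\eps\|_{L_{2^*}(\VR^d)}\le C\eps$ (the same ingredients the paper uses, with the crucial factor $\eps$ coming from $\nabla(\eps K_\eps)=\eps\nabla K_\eps$), and then interpolate down to $L_2(B_{\eps^{-r}})$ by a single H\"older step, paying $|B_{\eps^{-r}}|^{1/2-1/q}$; the exponent arithmetic ($1/2-1/2^*=1/d$, resp. $1/4$ for $d=2$) reproduces the paper's rates with one fewer step. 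For \eqref{E:convergence_K_Keps}, the paper simply repeats its Poincar\'e-plus-mean argument with $r=1$ applied to $\eps(K_\eps-H_\eps)$, whereas you exploit that $K_\eps-H_\eps\in H^1_0(\eps^{-1}\Dsf)$ and invoke the scaled Friedrichs inequality, whose $O(\eps^{-1})$ constant cancels the prefactor $\eps$ exactly; this is cleaner and makes the mechanism transparent, at the cost of using the zero boundary values (which are indeed available here, so nothing is lost). One small remark: for $d=2$ you justify the boundedness of $\|\eps K_\eps\|_{L_2(\VR^2)}$ by appealing to the weak convergence in Corollary~\ref{cor:epsK}; that is legitimate (weakly convergent sequences are bounded), but it can also be obtained directly from Lemma~\ref{lem:u_ueps} by the change of variables $\|\eps K_\eps\|_{L_2(\VR^d)}=\eps^{-d/2}\|u_\eps-u_0\|_{L_2(\Dsf)}\le C$, which is what the paper implicitly relies on.
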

\begin{proof}
    We first show \eqref{E:ball_comp_conv}. We apply  the Poincar\'e inequality on the ball $B_R:= B_R(0)$ of radius $R>0$:
    \ben\label{E:poincare}
\|\eps K_\eps - (\eps K_\eps)_R\|_{L_2(B_R)} \le R\eps C \|\nabla K_\eps \|_{L_2(B_R)^d}\le R\eps C,
\een
where $(\eps K_\eps)_R:= \fint_{B_R} \eps K_\eps\;dx$ denotes the usual average and $C$ is independent of $\eps$.  Now we estimate the average using H\"older's inequality for $q>1$,
\ben\label{E:poincare2}
\| (\eps K_\eps)_R\|_{L_2(B_R)} = C R^{d/2}  |(\eps K_\eps)_R| \le C R^{-\frac{d}{2}+\frac{d(q-1)}{q}} \|\eps K_\eps \|_{L_q(B_R)}.
\een
Hence choosing $R=\eps^{-r}$ with $r\in (0,1)$ and using the Ladyzhenskaya resp. Gagliardo-Nirenberg inequality, we obtain
\ben\label{E:poincare3}
\|\eps K_\eps \|_{L_q(B_{\eps^{-r}})} \le \left\{ 
    \begin{array}{cc}
        C\eps^{1/2}\|\eps K_\eps\|_{L_2(\VR^2)}^{1/2}\|\nabla K_\eps\|_{L_2(\VR^2)^2}^{1/2} &\quad \text{ for } d=2 \text{ and } q:= 4\\
        C\eps \|\nabla K_\eps\|_{L_2(\VR^d)^d} &\quad \text{ for } d\ge 3 \text{ and } q:= 2^*
    \end{array}
\right..
\een
Combining \eqref{E:poincare2} and \eqref{E:poincare3} and using the boundedness of $(\nabla K_\eps) $ in $L_2(\VR^d)^d$ and
$(\eps K_\eps) $ in $L_2(\VR^d)$ yields
\ben\label{E:poincare4}
\| (\eps K_\eps)_{\eps^{-r}}\|_{L_2(B_{\eps^{-r}})} \le \left\{ \begin{array}{cc}
            C \eps^{1/2-r(-\frac{d}{2}+\frac{d(q-1)}{q})} & \text{ for } d = 2 \text{ and } q=4 \\
        C \eps^{1-r(-\frac{d}{2}+\frac{d(q-1)}{q})}  & \text{ for } d \ge 3 \text{ and } q=2^*
\end{array} \right..
\een
It is readily checked that for $d=2$ and $q=4$, we have $-\frac{d}{2}+\frac{d(q-1)}{q}=\frac12$ and 
for $d \ge 3$ and $q= 2^*= 2d/(d-2)$, we have  $-\frac{d}{2}+\frac{d(q-1)}{q} = 1$. Now \eqref{E:ball_comp_conv} follows by choosing $R=\eps^{-r}$ in \eqref{E:poincare}, using the reverse triangle inequality and combining the result 
with \eqref{E:poincare4}.

The convergence \eqref{E:convergence_K_Keps} follows by repeating the previous steps with $\eps(K_\eps -H_\eps)$ in place of $\eps K_\eps$ and choosing $r=1$ and using $\nabla (K_\eps - H_\eps) \to 0$ in $L_2(\VR^d)^d$ as $\eps\searrow 0$ (see Lemma~\ref{L:Heps_Keps}).
\end{proof}
\fi

\subsection{Computation of $R_1(u_0,p_0)$ and $R_2(u_0,p_0)$}

It is easily seen from the continuity of $a_1$, $a_2$, $\nabla u_0$ and $\nabla p_0$ that $G$ is $\ell$-differentiable with
\ben
    \partial_\ell G(0,u_0,p_0) = \frac{1}{|\omega|} (a_1 (U_0 ) - a_2 (U_0 )) \cdot \int_\omega P_0 \, dx .
\een

It remains to check that the limits of $R_1(u_0,p_0)$ and $R_2(u_0,p_0)$ exist. For this we use Assumption~\ref{A:nonlinearity}(i)-(iii). Using the change of variables $T_\eps$, we have
\ben\label{eq:R1_eps}
\begin{split}
    R_1^\eps (u_0,p_0) & =\frac{1}{\ell(\eps)} \int_0^1 \int_{\Dsf} \left(\partial_u \Ca_{\eps}(x, \nabla (su_\eps  + (1-s)u_0)) - \partial_u \Ca_{\eps}(x, \nabla u_0 ) \right)(\nabla (u_\eps-u_0))  \cdot \nabla p_0\;dx\;  ds \\
                       &  + \frac{1}{\ell(\eps)} \int_{\Dsf} |\nabla (u_\eps  - u_0)|^2 \; dx\\
                   & =  \frac{1}{|\omega|} \int_0^1 \int_{\VR^d} \left(\partial_u \Ca_{\omega}(x,  s\nabla K_\eps   + \nabla u_0(x_\eps) ) - \partial_u \Ca_{\omega}(x, \nabla u_0(x_\eps) ) \right)(\nabla K_\eps)  \cdot \nabla p_0 (x_\eps)\;dx\;  ds \\
                   & + \frac{1}{|\omega|}\int_{\VR^d} |\nabla K_\eps|^2 \; dx  \\
                   & \to \frac{1}{|\omega|} \int_0^1 \int_{\VR^d} \left(\partial_u \Ca_{\omega}(x,  s\nabla K   + U_0) - \partial_u \Ca_{\omega}(x, U_0 ) \right)(\nabla K )  \cdot P_0\;dx\;  ds + \frac{1}{|\omega|}\int_{\VR^d} |\nabla K|^2 \; dx.
\end{split}
\een
Here, we used that $\nabla K_\eps \to \nabla K$ strongly in $L_2(\VR^d)^d$ as $\eps\searrow 0$ for the limit of the second term. To see the convergence of the first term, we may write
\begin{align*}
   \int_0^1 \int_{\VR^d} (\partial_u & \Ca_{\omega}(x,  s\nabla K_\eps    + \nabla u_0(x_\eps) ) - \partial_u \Ca_{\omega}(x, \nabla u_0(x_\eps) ) )(\nabla K_\eps)  \cdot \nabla p_0 (x_\eps)\;dx ds = \\
                &+ \int_0^1 \int_{\VR^d}(\partial_u \Ca_{\omega}(x,  s\nabla K_\eps   + \nabla u_0(x_\eps) ) - \partial_u \Ca_{\omega}(x,  s\nabla K   + \nabla u_0(x_\eps) ) )(\nabla K_\eps)  \cdot \nabla p_0 (x_\eps) \;dx ds \\
                &+ \int_0^1 \int_{\VR^d}(\partial_u \Ca_{\omega}(x,  s\nabla K   + \nabla u_0(x_\eps) ) - \partial_u \Ca_{\omega}(x, \nabla u_0(x_\eps) ) )(\nabla (K_\eps-K))  \cdot \nabla p_0 (x_\eps) \;dx ds\\
                                                          & +\int_0^1 \int_{\VR^d}(\partial_u \Ca_{\omega}(x,  s\nabla K   + \nabla u_0(x_\eps) ) - \partial_u \Ca_{\omega}(x, \nabla u_0(x_\eps) ) )(\nabla K)  \cdot \nabla p_0 (x_\eps)\;dx ds.
\end{align*}
Using Assumption~\ref{A:nonlinearity}(iii) and $\nabla p_0 \in L^\infty(\Dsf)^d$, we see that the 
absolute value of the first and second term on the right hand side can be bounded by $C\|\nabla(K_\eps-K)\|_{L_2(\VR^d)^d}\|\nabla K\|_{L_2(\VR^d)^d}$  and $C\|\nabla(K_\eps-K)\|_{L_2(\VR^d)^d}\|\nabla K_\eps\|_{L_2(\VR^d)^d}$, respectively,   and hence using $\nabla K_\eps \to \nabla K$ in $L_2(\VR^d)^d$ as $\eps\searrow 0$ they disappear in the limit. The last term converges to the desired limit by using Lebesgue's dominated convergence theorem. 
Using the fundamental theorem, we obtain the expression in \eqref{E:R_term}. Similarly, using \eqref{rem_aiii}, the continuity of $\nabla u_0$ and $\nabla p_0$ at $z$, the continuity of $\partial_ua_1,\partial_ua_2$, and again $\nabla K_\eps \to \nabla K$ strongly in $L_2(\VR^d)^d$, we obtain by Lebesgue's dominated convergence theorem
\ben
\begin{split}
    R_2^\eps (u,p) & = \frac{1}{\ell(\eps)} \int_{\omega_\eps } (\partial_u a_1(\nabla u_0) - \partial_u  a_2 (\nabla u_0))(\nabla (u_\eps-u_0))  \cdot \nabla p_0\;dx\\
                   & = \frac{1}{|\omega|}\int_{\omega} (\partial_u a_1(\nabla u_0(x_\eps)) - \partial_u a_2(\nabla u_0(x_\eps)))(\nabla K_\eps)  \cdot \nabla p_0(x_\eps)\;dx \\
                   & \to \frac{1}{|\omega|}\int_{\omega} (\partial_u a_1(U_0) - \partial_u a_2(U_0))(\nabla K)  \cdot P_0\;dx.
\end{split}
\een
This finishes the proof of the Main Theorem.

\begin{remark}
    We remark that, while the problem considered in this paper is in an $L_2$ setting, the projection trick of Definition \ref{D:projection} is also possible in $W^1_{p}$ spaces. Thus, an extension of our results for the topological derivative to PDE constraints posed in an $L_p$ setting as considered in \cite{a_AMBO_2017a} with $1 < p < \infty$, $p \neq 2$ seems possible.
\end{remark}

\newcommand{\varAd}{ \tilde Q }
\begin{remark}
    The obtained formula for the topological derivative coincides with the formulas obtained in \cite[Thm. 4.4]{a_AMBO_2017a} and \cite[Thm. 2 and Thm. 3]{AmstutzGangl2019} for the respective special cases, which can be seen as follows: Introducing the problem defining the variation of the adjoint state $\varAd \in \dot{BL}(\VR^d)$,
    \ben \label{eq_varAdjoint}
        \int_{\VR^d} \partial_u \Ca_\omega(x,U_0)(\nabla \varphi) \cdot \nabla \varAd \,dx = -\int_\omega (\partial_u a_1(U_0) - \partial_u a_2(U_0))(\nabla \varphi) \cdot P_0\; dx
    \een
    for all $\varphi \in BL(\VR^d)$, and adding the left and right hand side of \eqref{E:limit_K} tested with the solution $\varAd$ of \eqref{eq_varAdjoint}, the term $R_2(u_0, p_0)$ can be rewritten as
    \ben
        \begin{split}
            R_2(u_0,p_0) =& - \frac{1}{|\omega|} \int_{\VR^d} \partial_u \Ca_\omega(x,U_0)(\nabla K) \cdot \nabla \varAd \,dx \\
            =&\frac{1}{|\omega|} \int_{\VR^d} (\Ca_\omega(x, \nabla K  +U_0) - \Ca_\omega(x, U_0 ) -\partial_u \Ca_\omega(x,U_0)(\nabla K) )\cdot \nabla \varAd \; dx \\
                 &+ \frac{1}{|\omega|}\int_{\omega}(a_1 (U_0 ) - a_2 (U_0 )) \cdot \nabla \varAd \;dx.
        \end{split}
    \een
    Together with the terms $\partial_\ell G(0,u_0,p_0)$ and $R_1(u_0, p_0)$, the topological derivative reads
    \ben \label{eq_TDotherFormat}
    \begin{split}
        dJ(\Omega)(z) = \frac{1}{|\omega|} &\left[ (a_1 (U_0 ) - a_2 (U_0 )) \cdot \int_\omega P_0+\nabla \varAd \, dx \right. \\
        & \left. + \int_{\VR^d}(\Ca_\omega(x, \nabla K  +U_0) - \Ca_\omega(x, U_0 ) -\partial_u \Ca_\omega(x,U_0)(\nabla K) )\cdot (P_0 + \nabla \varAd) \, dx \right. \\
        & \left. + \int_{\VR^d} |\nabla K|^2 \, dx \right]
    \end{split}
    \een
    which is, up to a scaling by $1/|\omega|$ the same formula as obtained in \cite{a_AMBO_2017a} and \cite{AmstutzGangl2019}. The different scaling is due to a different definition of the topological derivative in these publications.
\end{remark}

\begin{remark}
    It can be seen from \eqref{eq_varAdjoint} that $\nabla \varAd$ depends linearly on $P_0$. Thus, it can be shown that there exists a matrix $\mathcal M = \mathcal M(\omega,\partial_u a_1(U_0), \partial_u a_2(U_0))$, which is related to the concept of polarization matrices \cite{AmmariKang2007}, such that $\int_\omega \nabla \varAd \, dx = \mathcal M P_0$, see also \cite[Sec. 6]{AmstutzGangl2019} for the special setting of two-dimensional magnetostatics. 
    
    For a discussion on the efficient numerical evaluation of the second integral in \eqref{eq_TDotherFormat} involving $K$, see \cite[Sec. 7]{AmstutzGangl2019}.
\end{remark}

\section{Comparison with the averaged adjoint approach and more general cost functions}\label{sec_averaged_adjoint}
In this section we compare the Lagrangian approach of the previous section with the averaged adjoint approach; see \cite{Sturm2019}. We demonstrate that the averaged adjoint approach has some advantages at the price of being more technically involved. In fact, with the averaged adjoint approach we are 
able to treat a cost function of the type:
\ben
J(\Omega):= a \int_{\Dsf}(u-u_d)^2\;dx + b \int_{\Dsf}|\nabla(u-u_d)|^2\;dx
\een
with $a,b\ge 0$ and $u$ the solution to \eqref{eq:state_per} with $\eps=0$ for $\Omega\subset \Dsf$. It can be checked that the first term cannot be directly be handled with the Lagrangian technique of Section~\ref{sec_adjFramework}. In fact in order to pass to the limit $\eps\searrow0$ in \eqref{eq:R1_eps}, we would need $\eps K_\eps \to 0$ strongly in $L_2(\VR^d)^d$, which does not directly follow (see Corollary~\ref{cor:epsK}). Note that this term is not covered by the analysis in \cite{a_AMBO_2017a}.

\subsection{Averaged adjoint}
We use the same setting as in Section~\ref{sec_adjFramework} and let $G$ be a Lagrangian (see Definition~\ref{def:lagrangian}) defined on $[0,\tau]\times X\times Y$ with $X,Y$ being vector spaces.

The key ingredient of the averaged adjoint approach is the averaged adjoint equation. In addition to $G$ as in Definition~\ref{def:lagrangian} we assume that $G$ satisfies:
\begin{assumption*}[H1]\label{ass:b}
    For all $t\in[0,\tau]$, $\varphi,\tilde\varphi,p\in X$ and $\psi\in Y$ the derivative 
    $[0,1] \to \VR:\; s\mapsto \partial_\varphi G(t,\varphi+s\tilde \varphi,\psi)(p)$ is well-defined and integrable 
    on $[0,1]$. 
\end{assumption*}

For a Lagrangian satisfying the previous assumption we can introduce the averaged adjoint equation.

\begin{definition}
  Given $\eps\in [0,\tau]$ and $(\fu_0,\fu_\eps)\in E(0)\times E(\eps)$, the 
	\emph{averaged adjoint state equation} is defined as follows: find $\fp_\eps \in X$, such that
	\ben\label{eq:aa_equation}
	\int_0^1 \!\!\partial_\fu G(\eps,s\fu_\eps + (1-s)\fu_0 ,\fp_\eps)(\varphi)\, ds=0 \quad \text{ for all } \varphi\in X. 
	\een
	For every triplet $(\eps, u_0,u_\eps)$ the set of solutions of \eqref{eq:aa_equation} is denoted by $Y(\eps,\fu_0,\fu_\eps)$ and its elements are referred to as \emph{adjoint states} for $\eps =0$ and \emph{averaged adjoint states} for $\eps >0$. 
\end{definition}

By construction of the averaged adjoint equation we have for $\eps \in [0,\tau]$, 
\ben
G(\eps,\fu_\eps,\fp_\eps)  = G(\eps,\fu_0,\fp_\eps).
\een

The following is an alternative to Theorem~\ref{thm:diff_lagrange} (see \cite{Sturm2019}).

\begin{theorem} \label{thm:diff_lagrange2}
    Let $G$ be an $\ell$-differentiable Lagrangian function satisfying Assumption~(H1). Assume further that for all $\eps \in [0, \tau]$
  \begin{itemize}
    \setlength{\itemsep}{3pt}
\item[(i)] the set $E(\eps) = \{ u_\eps \}$ is a singleton,
\item[(ii)] for $u_0 \in E(0)$, $u_\eps \in E(\eps)$ the set of averaged adjoint states $Y(\eps, u_0, u_\eps )=\{p_\eps\}$ is a singleton,
\item[(iii)]
     the limit  
        \ben
        R(u_0,p_0) := \lim_{\eps\searrow 0} \frac{G(\eps,\fu_0,\fp_\eps)-G(\eps,\fu_0,\fp_0)}{\ell(\eps)} \quad \text{ exists}. 
	\een
\end{itemize}    
Then we have 
\ben
d_\ell g(0) = \partial_\ell G(0,\fu_0,\fp_0) + R(u_0,p_0).
\een
\end{theorem}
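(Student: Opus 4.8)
The plan is to exploit the defining property of the averaged adjoint recorded just before the theorem, namely the identity $G(\eps,u_\eps,p_\eps)=G(\eps,u_0,p_\eps)$, in order to rewrite $g(\eps)$ entirely in terms of the \emph{fixed} state $u_0$. First I would observe that because $p\mapsto G(\eps,u,p)$ is affine and $u_\eps$ solves the state equation \eqref{eq:state}, the linear part of $p\mapsto G(\eps,u_\eps,p)$ vanishes identically: since $\partial_p G(\eps,u_\eps,0)(\cdot)\equiv 0$, affineness yields $G(\eps,u_\eps,p)=G(\eps,u_\eps,0)=g(\eps)$ for \emph{every} $p\in Y$. In particular $g(\eps)=G(\eps,u_\eps,p_\eps)$, and the same reasoning at $\eps=0$ (where the averaged adjoint equation \eqref{eq:aa_equation} collapses to the ordinary adjoint equation) gives $g(0)=G(0,u_0,p_0)$.

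Next I would invoke the averaged adjoint identity to replace $u_\eps$ by $u_0$, obtaining $g(\eps)=G(\eps,u_\eps,p_\eps)=G(\eps,u_0,p_\eps)$. Consequently the finite difference decomposes cleanly as
\[
g(\eps)-g(0)=\big(G(\eps,u_0,p_\eps)-G(\eps,u_0,p_0)\big)+\big(G(\eps,u_0,p_0)-G(0,u_0,p_0)\big).
\]
After dividing by $\ell(\eps)$ and letting $\eps\searrow 0$, the first bracket converges to $R(u_0,p_0)$ by hypothesis (iii), while the second converges to $\partial_\ell G(0,u_0,p_0)$ by the $\ell$-differentiability of $G$ (Definition~\ref{def:lagrangian}(b)). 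Adding the two limits yields $d_\ell g(0)=\partial_\ell G(0,u_0,p_0)+R(u_0,p_0)$, as claimed.

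Unlike Theorem~\ref{thm:diff_lagrange}, where the state variation produces a genuine two-term remainder, here the averaged adjoint equation \eqref{eq:aa_equation} is engineered precisely so that the $u_\eps-u_0$ variation is absorbed, collapsing everything into the single term $R$. The abstract argument itself is thus essentially bookkeeping; the only points requiring care are the well-definedness of all objects involved (for which the singleton assumptions (i) and (ii) guarantee that $g$, $p_\eps$ and $p_0$ are unambiguous) and the justification that the affine structure together with the state equation really forces $p\mapsto G(\eps,u_\eps,p)$ to be constant. The genuine difficulty is deferred to verifying hypothesis (iii) in concrete applications: the averaged adjoint $p_\eps$ is typically far less tractable than the ordinary adjoint $p_0$, which is exactly the price, noted in the text, one pays for the cleaner single-term formula.
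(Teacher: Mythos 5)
Your proof is correct and takes essentially the same approach as the paper, which defers the proof to \cite{Sturm2019} but records its key ingredient---the identity $G(\eps,u_\eps,p_\eps)=G(\eps,u_0,p_\eps)$---immediately before the theorem. Your use of the affine-in-$p$ structure together with the state equation to get $g(\eps)=G(\eps,u_\eps,p_\eps)$ (and $g(0)=G(0,u_0,p_0)$), followed by the two-term split of the difference quotient handled by hypothesis (iii) and $\ell$-differentiability, is exactly the standard argument.
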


\subsection{Analysis of the averaged adjoint equation}

We now apply Theorem~\ref{thm:diff_lagrange2} to $X=Y=H^1_0(\Dsf)$ with Lagrangian $G$ given by
\ben
G(\eps, u, p) :=  a\int_{\Dsf} (u - u_d)^2 \; dx +  b\int_{\Dsf} | \nabla(u - u_d)|^2 \; dx +  \int_{\Dsf} \Ca_{\Omega_\eps}(x, \nabla u) \cdot \nabla p \; dx - \int_{\Dsf} f p \;dx.
\een
We use the same setting as in Subsection~\ref{subsec_main_results}. The averaged adjoint $p_\eps \in H_0^1(\Dsf)$ is defined by
\ben
\int_0^1 \partial_{u}G(\eps, su_\eps + (1-s)u_0, p_\eps)(\varphi)\;ds =0 \quad \text{ for all } \varphi\in H_0^1(\Dsf).
\een
This is equivalent to
\ben\label{eq:adjoint_pert}
\begin{split}
    \int_0^1 \int_{\Dsf} \partial_u \Ca_{\eps}(x, \nabla (su_\eps & + (1-s)u_0))(\nabla \varphi)  \cdot \nabla p_\eps\;dx\;  ds
	       \\
                                                                  &= - \int_{\Dsf} (u_\eps + u_0 - 2u_d )\varphi\; dx- \int_{\Dsf} \nabla (u_\eps + u_0 - 2u_d )\cdot \nabla\varphi\; dx
\end{split}
\een
for all $\varphi \in H^1_0(\Dsf)$. As noted earlier, Problem \eqref{eq:state_per} admits a unique solution under Assumption~\ref{A:nonlinearity}. Moreover, problem \eqref{eq:adjoint_pert} has a unique solution due to Assumption \ref{A:nonlinearity} and Lax-Milgram. Therefore, the assumptions (i) and (ii) of Theorem \ref{thm:diff_lagrange2} are satisfied. The $\ell$-differentiability of $G$ follows again as in the previous section. It remains to show the existence of the limit term $R(u_0, p_0)$. 

The following analysis is similar to the study of the perturbation of the 
state equation. Since the model problem is quasi-linear it is crucial that we have the strong convergence of $\nabla K_\eps \to \nabla K$. 

\begin{lemma}\label{L:bound_p_eps}
There is a constant $C>0$, such that
\ben
\|p_\eps - p_0\|_{H^1(\Dsf)} \le C (\eps^{d/2} + \|u_\eps - u_0\|_{H^1(\Dsf)})  \quad \text{ for all } \eps >0.
\een
\end{lemma}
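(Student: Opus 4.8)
The plan is to run the energy estimate of Lemma~\ref{lem:u_ueps}, now for the \emph{averaged} adjoint equation \eqref{eq:adjoint_pert}, by testing with $p_\eps-p_0$ and playing the uniform coercivity of the averaged adjoint bilinear form against the size of the two perturbations. First I would record the relevant structure: writing $B_\eps(\varphi,\psi):=\int_0^1\int_\Dsf \partial_u\Ca_\eps(x,\nabla(su_\eps+(1-s)u_0))(\nabla\varphi)\cdot\nabla\psi\,dx\,ds$ for the form on the left of \eqref{eq:adjoint_pert}, differentiating the strong monotonicity Assumption~\ref{A:nonlinearity}(i) gives $\partial a_i(x)v\cdot v\ge c_1|v|^2$, so $B_\eps(\varphi,\varphi)\ge c_1\|\nabla\varphi\|_{L_2(\Dsf)^d}^2$ uniformly in $\eps$ and in the linearisation point, while $|\partial a_i(x)v|\le c_2|v|$ gives boundedness. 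For $\eps=0$ the $s$-integral collapses (since $u_\eps=u_0$) and $p_0$ solves $B_0(\varphi,p_0)=F_0(\varphi)$, where $F_\eps$ is the right-hand side of \eqref{eq:adjoint_pert} and $B_0(\varphi,\psi)=\int_\Dsf\partial_u\Ca_0(x,\nabla u_0)(\nabla\varphi)\cdot\nabla\psi\,dx$.

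Next I would subtract the two equations and rearrange, using $F_0(\varphi)=B_0(\varphi,p_0)$, into $B_\eps(\varphi,p_\eps-p_0)=[F_\eps(\varphi)-F_0(\varphi)]+[B_0(\varphi,p_0)-B_\eps(\varphi,p_0)]$. The source difference is harmless: $F_\eps(\varphi)-F_0(\varphi)=-\int_\Dsf(u_\eps-u_0)\varphi\,dx-\int_\Dsf\nabla(u_\eps-u_0)\cdot\nabla\varphi\,dx$, which by Cauchy--Schwarz is at most $C\|u_\eps-u_0\|_{H^1(\Dsf)}\|\varphi\|_{H^1(\Dsf)}$.

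The heart of the estimate is the operator difference $B_0(\varphi,p_0)-B_\eps(\varphi,p_0)$, which I would split into two independent effects by inserting the term carrying the perturbed material but the frozen linearisation point $\nabla u_0$. The material change: $\Ca_\eps$ and $\Ca_0$ differ only on $\omega_\eps$, where $\partial_u\Ca_\eps=\partial a_1$ and $\partial_u\Ca_0=\partial a_2$, so this contribution is $\int_{\omega_\eps}(\partial a_2(\nabla u_0)-\partial a_1(\nabla u_0))(\nabla\varphi)\cdot\nabla p_0\,dx$; bounding the operator norm by $2c_2$, using $\nabla p_0\in L_\infty(\Dsf)^d$ and Cauchy--Schwarz yields $\le C\|\nabla p_0\|_{L_\infty}|\omega_\eps|^{1/2}\|\nabla\varphi\|_{L_2}=C\eps^{d/2}\|\nabla\varphi\|_{L_2}$, since $|\omega_\eps|=|\omega|\eps^d$. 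The linearisation-point change: by Assumption~\ref{A:nonlinearity}(iii) one has $|\partial a_i(\nabla u_0)-\partial a_i(\nabla(su_\eps+(1-s)u_0))|\le c_3 s|\nabla(u_\eps-u_0)|$, so after the $s$-integration this contribution is $\le c_3\|\nabla p_0\|_{L_\infty}\|\nabla(u_\eps-u_0)\|_{L_2}\|\nabla\varphi\|_{L_2}\le C\|u_\eps-u_0\|_{H^1}\|\nabla\varphi\|_{L_2}$.

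Finally I would test with $\varphi=p_\eps-p_0\in H^1_0(\Dsf)$, apply coercivity on the left and the three bounds on the right to obtain $c_1\|\nabla(p_\eps-p_0)\|_{L_2}^2\le C(\eps^{d/2}+\|u_\eps-u_0\|_{H^1})\|p_\eps-p_0\|_{H^1}$, and then conclude with Poincar\'e's inequality after dividing by $\|p_\eps-p_0\|_{H^1}$ (the case $p_\eps=p_0$ being trivial). The only genuinely delicate point is isolating the two perturbations cleanly --- the material swap on the shrinking set $\omega_\eps$, which produces the $\eps^{d/2}$, versus the drift of the linearisation point over all of $\Dsf$, which produces the $\|u_\eps-u_0\|_{H^1}$; both estimates hinge on $\nabla p_0\in L_\infty(\Dsf)^d$ from the standing regularity hypotheses, and the second crucially uses the Lipschitz continuity of $\partial a_i$ in Assumption~\ref{A:nonlinearity}(iii).
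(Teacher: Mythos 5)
Your proposal is correct and follows essentially the same route as the paper: subtract the averaged adjoint equations for $\eps>0$ and $\eps=0$, insert the intermediate term $\partial_u \Ca_\eps(x,\nabla u_0)$ to separate the linearisation-point drift (controlled via Assumption~\ref{A:nonlinearity}(iii) and $\nabla p_0\in L_\infty(\Dsf)^d$, giving $\|u_\eps-u_0\|_{H^1(\Dsf)}$) from the material swap on $\omega_\eps$ (giving $|\omega_\eps|^{1/2}=C\eps^{d/2}$), then test with $\varphi=p_\eps-p_0$ and use the coercivity inherited from Assumption~\ref{A:nonlinearity}(i) together with Poincar\'e's inequality. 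The only difference is presentational: you spell out the coercivity bound $\partial a_i(x)v\cdot v\ge c_1|v|^2$ and the individual estimates that the paper compresses into its final sentence.
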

\begin{proof}
Using \eqref{eq:adjoint_pert} for $\eps >0$ and $\eps =0$ we obtain
\ben\label{E:aver_ad_per}
\begin{split}
    \int_0^1 \int_{\Dsf} \partial_u \Ca_\eps(x, \nabla (su_\eps & + (1-s)u_0))(\nabla \varphi)  \cdot \nabla (p_\eps - p_0)\;dx\;  ds  \\
    + \int_0^1  \int_{\Dsf} \bigg(\partial_u \Ca_\eps(x, \nabla (su_\eps & + (1-s)u_0))- \partial_u \Ca_\eps(x, \nabla u_0 ))(\nabla \varphi)\bigg)  \cdot \nabla p_0\;dx\;  ds\\
    + \int_{\Dsf} \bigg(\partial_u \Ca_\eps(x, \nabla u_0 ) &- \partial_u \Ca_0(x, \nabla u_0)(\nabla \varphi)\bigg)  \cdot \nabla p_0\;dx\\
                                                                                  &+  a  \int_\Dsf (u_\eps - u_0)\varphi \; dx +  b   \int_\Dsf \nabla (u_\eps - u_0)\cdot \nabla \varphi \; dx = 0                                                                                
\end{split}
\een
for all $\varphi \in H_0^1(\Dsf)$. Testing with $\varphi = p_\eps-p_0$, using the boundedness of $\nabla p_0$, H\"older's inequality from Assumption~\ref{A:nonlinearity}  gives the result.
\end{proof}

\begin{definition}
We consider again the variation of the adjoint state
\ben
Q_\eps := \frac{(p_\eps - p_0) \circ T_\eps}{\eps} \in H_0^1(\eps^{-1} \Dsf),\; \eps >0.
\een
\end{definition}
Note that Lemma~\ref{L:bound_p_eps} together with Lemma \ref{lem:u_ueps} implies that 
\ben
\int_{\VR^d} (\eps Q_\eps)^2 + |\nabla Q_\eps|^2 \;dx \le C \quad \text{ for all } \eps >0.
\een
This means that $(Q_\eps )$ is bounded in the Beppo-Levi space $BL(\VR^d)$. We now show the weak 
convergence $Q_\eps\rightharpoonup Q$ in $BL(\VR^d)$ to some $Q\in BL(\VR^d)$. 
It can also be shown that $\eps Q_\eps \to 0$ in $L_2(\VR^d)$.

\begin{theorem}
    We have 
    \ben
    \nabla Q_\eps \tow \nabla Q \quad \text{ weakly in } L_2(\VR^d)^d  \text{ as }\eps \searrow 0,
    \een
    where $Q\in BL(\VR^d)$ is the unique solution to
    \ben\label{E:solution_Q}
    \begin{split}
        \int_{\VR^d}& \int_0^1 \partial_{u} \Ca_\omega(x, s\nabla K + U_0)(\nabla \psi) \cdot \nabla Q \; ds\; dx = \\
                    & - \int_{\VR^d} \int_0^1 \big(\partial_{u} \Ca_\omega(x, s\nabla K + U_0)(\nabla \psi)- \partial_{u} \Ca_\omega(x,U_0)(\nabla \psi)\big)\cdot P_0\;dx \\
         & - \int_\omega (\partial_{u} a_1(U_0) -  \partial_{u} a_2(U_0))(\nabla \psi)\cdot P_0\;dx 
          - b \int_{\VR^d} \nabla K \cdot \nabla \psi \; dx 
         \quad \text{ for all } \psi\in BL(\VR^d),
\end{split}
    \een
with $P_0 := \nabla p_0(z)$ and $K$ defined in \eqref{E:limit_K}.
\end{theorem}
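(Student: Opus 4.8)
The plan is to run the standard weak--strong compactness scheme: extract a weak cluster point of the bounded family $(\nabla Q_\eps)$, pass to the limit in a rescaled version of the averaged adjoint equation, and identify the limit through the unique solvability of \eqref{E:solution_Q}. I would first record this well-posedness. The bilinear form on the left-hand side of \eqref{E:solution_Q}, namely $(\psi,v)\mapsto\int_{\VR^d}\int_0^1\partial_u\Ca_\omega(x,s\nabla K+U_0)(\nabla\psi)\cdot\nabla v\,ds\,dx$, is bounded and coercive on $\dot{BL}(\VR^d)$: Assumption~\ref{A:nonlinearity}(i) gives $\partial a_i(\xi)v\cdot v\ge c_1|v|^2$ for all $\xi,v$, whence its value at $(\psi,\psi)$ is $\ge c_1\|\nabla\psi\|_{L_2(\VR^d)^d}^2$, while $|\partial a_i(\xi)v|\le c_2|v|$ yields boundedness. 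Its right-hand side is a bounded linear functional on $\dot{BL}(\VR^d)$ by Assumption~\ref{A:nonlinearity}(iii), $\nabla K\in L_2(\VR^d)^d$, and the boundedness of $\omega$. Hence Lax--Milgram produces a unique $Q$.

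Next I would derive the equation for $Q_\eps$. Starting from the difference \eqref{E:aver_ad_per} of the averaged adjoint equations at $\eps$ and $0$, I test with $\varphi(x)=\psi(x/\eps)$ for a fixed $\psi\in C_c^\infty(\VR^d)$ (so $\varphi\in H^1_0(\Dsf)$ as soon as $\supp\psi\subset\eps^{-1}\Dsf$), change variables via $T_\eps$, and divide by $\eps^{d-1}$. Using $\nabla K_\eps(x)=\nabla(u_\eps-u_0)(x_\eps)$, $\nabla Q_\eps(x)=\nabla(p_\eps-p_0)(x_\eps)$, $(u_\eps-u_0)(x_\eps)=\eps K_\eps(x)$, and that $\partial_u\Ca_\eps-\partial_u\Ca_0$ is supported on $\omega_\eps$ and equals $\partial_u a_1-\partial_u a_2$ there, this yields
\ben
\begin{split}
&\int_0^1\!\!\int_{\VR^d}\partial_u\Ca_\omega(x,\nabla u_0(x_\eps)+s\nabla K_\eps)(\nabla\psi)\cdot\nabla Q_\eps\,dx\,ds\\
&\qquad=-\int_0^1\!\!\int_{\VR^d}\big(\partial_u\Ca_\omega(x,\nabla u_0(x_\eps)+s\nabla K_\eps)-\partial_u\Ca_\omega(x,\nabla u_0(x_\eps))\big)(\nabla\psi)\cdot\nabla p_0(x_\eps)\,dx\,ds\\
&\qquad\phantom{=}-\int_\omega\big(\partial_u a_1(\nabla u_0(x_\eps))-\partial_u a_2(\nabla u_0(x_\eps))\big)(\nabla\psi)\cdot\nabla p_0(x_\eps)\,dx\\
&\qquad\phantom{=}-a\eps\int_{\VR^d}(\eps K_\eps)\,\psi\,dx-b\int_{\VR^d}\nabla K_\eps\cdot\nabla\psi\,dx,
\end{split}
\een
in which the $a$-contribution carries the explicit prefactor $\eps$.

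Since $(\nabla Q_\eps)$ is bounded in $L_2(\VR^d)^d$ (established just above the theorem), I would extract a subsequence with $\nabla Q_{\eps_n}\rightharpoonup\nabla\hat Q$ and pass to the limit. The decisive point is the left-hand term, a product of the coefficient $\partial_u\Ca_\omega(\cdot,\nabla u_0(\cdot_\eps)+s\nabla K_\eps)(\nabla\psi)$ with $\nabla Q_\eps$. The coefficient converges \emph{strongly} in $L_2(\supp\psi)^d$ to $\partial_u\Ca_\omega(\cdot,U_0+s\nabla K)(\nabla\psi)$: by Assumption~\ref{A:nonlinearity}(iii) the difference is bounded by $c_3(|\nabla u_0(x_\eps)-U_0|+s|\nabla K_\eps-\nabla K|)|\nabla\psi|$, where $\nabla u_0(x_\eps)\to U_0$ uniformly on the compact set $\supp\psi$ (continuity of $\nabla u_0$ at $z=0$) while $\nabla K_\eps\to\nabla K$ strongly in $L_2(\VR^d)^d$ by Theorem~\ref{T:Keps_strong_K}. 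Combined with $\nabla Q_\eps\rightharpoonup\nabla\hat Q$, the weak--strong product passes to the limit. On the right-hand side the first two terms converge by the same strong and uniform convergences together with $\nabla p_0(x_\eps)\to P_0$ uniformly on $\supp\psi$ and on $\omega$ and the continuity of $\partial_u a_i$, the $b$-term converges because $\nabla\psi$ is fixed, and the $a$-term tends to $0$ since $(\eps K_\eps)$ is bounded in $L_2(\supp\psi)$ and is multiplied by $\eps$. Thus $\hat Q$ satisfies \eqref{E:solution_Q} for all $\psi\in C_c^\infty(\VR^d)$, and, since both sides are continuous in $\nabla\psi$, for all $\psi\in BL(\VR^d)$ by density.

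Finally, uniqueness for \eqref{E:solution_Q} forces $\hat Q=Q$ independently of the chosen subsequence, and the subsequence principle upgrades this to $\nabla Q_\eps\rightharpoonup\nabla Q$ for the whole family. The main obstacle is precisely the limit passage in the nonlinear coefficient multiplying the weakly convergent $\nabla Q_\eps$: this is where the strong (not merely weak) convergence $\nabla K_\eps\to\nabla K$ from Theorem~\ref{T:Keps_strong_K} is indispensable, since with weak convergence alone the product of the $\eps$-dependent coefficient with $\nabla Q_\eps$ could not be identified.
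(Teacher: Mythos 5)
Your proof is correct and follows essentially the same route as the paper: rescale the difference of the averaged adjoint equations via $T_\eps$, use the boundedness of $(\nabla Q_\eps)$ in $L_2(\VR^d)^d$ to extract a weak cluster point, pass to the limit through weak--strong products driven by the strong convergence $\nabla K_\eps \to \nabla K$ from Theorem~\ref{T:Keps_strong_K}, and identify the limit via unique solvability of \eqref{E:solution_Q} together with the subsequence principle. Your write-up is in fact more careful on two points the paper leaves implicit: you prove well-posedness of \eqref{E:solution_Q} by Lax--Milgram instead of merely asserting uniqueness, and your rescaled $a$-term carries the correct prefactor (i.e. $a\eps\int(\eps K_\eps)\psi\,dx$, whereas \eqref{E:aver_ad_per_rescale} displays $a\int \eps K_\eps\,\psi\,dx$), so that boundedness of $(\eps K_\eps)$ in $L_2$ suffices where the paper invokes its weak convergence to zero.
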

\begin{proof}
Changing variables in \eqref{E:aver_ad_per} and rearranging yields
\ben\label{E:aver_ad_per_rescale}
\begin{split}
    \int_0^1 \int_{\VR^d}  \partial_u \Ca_\omega(x, s \nabla K_\eps & + \nabla u_0(x_\eps))  (\nabla \psi)  \cdot \nabla Q_\eps \;dx\;  ds = \\
    - \int_0^1  \int_{\VR^d} \bigg(\partial_u \Ca_\omega(x, s\nabla K_\eps & + \nabla u_0(x_\eps)))- \partial_u \Ca_\omega(x, \nabla u_0(x_\eps) ))(\nabla \psi)\bigg)  \cdot \nabla p_0(x_\eps)\;dx\;  ds\\
    -\int_{\omega} \bigg(\partial_{u} a_1(\nabla u_0(x_\eps)) &- \partial_{u} a_2(\nabla u_0(x_\eps))(\nabla \psi)\bigg)  \cdot \nabla p_0\;dx\;  ds \\
                                                              &  -  a\int_{\VR^d} \eps K_\eps \psi \, dx -  b\int_{\VR^d} \nabla K_\eps \cdot \nabla\psi \, dx=0                                                           
\end{split}
\een
for all $\psi\in H^1_0(\eps^{-1}\Dsf)$. Using $\nabla K_\eps \to \nabla K$ strongly in $L_2(\VR^d)^d$  and $\eps K_\eps \rightharpoonup 0$ weakly in $L_2(\VR^d)$, we can use the Lebesgue dominated convergence theorem pass to the limit in \eqref{E:aver_ad_per_rescale} (for a subsequence) and obtain that the weak limit of the subsequence of $(Q_\eps)$ satisfies \eqref{E:solution_Q}. Since the solution to \eqref{E:solution_Q} is unique we conclude that $ Q_\eps \tow Q$ weakly in $BL(\VR^d)$. 
\end{proof}

\subsection{Computation of $R( u_0, p_0)$}
\begin{lemma}
We have
\ben\label{eq:R_term}
R( u_0, p_0) = (a_1 (U_0 ) - a_2 (U_0) ) \cdot \fint_{\omega} \nabla Q \; dx,
\een
where $Q$ is the solution to \eqref{E:solution_Q}.
\end{lemma}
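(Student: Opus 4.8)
The plan is to unravel the definition $R(u_0,p_0)=\lim_{\eps\searrow 0}(G(\eps,u_0,p_\eps)-G(\eps,u_0,p_0))/\ell(\eps)$ and to exploit that $G$ is affine in its third argument. Since the two tracking terms $a\int_{\Dsf}(u_0-u_d)^2\,dx$ and $b\int_{\Dsf}|\nabla(u_0-u_d)|^2\,dx$ do not depend on $p$, they cancel in the numerator, leaving
\ben
G(\eps,u_0,p_\eps)-G(\eps,u_0,p_0) = \int_{\Dsf}\Ca_\eps(x,\nabla u_0)\cdot\nabla(p_\eps-p_0)\,dx - \int_{\Dsf} f(p_\eps-p_0)\,dx.
\een
First I would insert the \emph{unperturbed} state equation \eqref{E:weakformulation}, tested with $\varphi=p_\eps-p_0\in H^1_0(\Dsf)$, which gives $\int_{\Dsf}\Ca_0(x,\nabla u_0)\cdot\nabla(p_\eps-p_0)\,dx=\int_{\Dsf} f(p_\eps-p_0)\,dx$. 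Subtracting this identity eliminates the right-hand side and replaces $\Ca_\eps$ by the difference $\Ca_\eps-\Ca_0$.

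Next I would use that $\Ca_\eps$ and $\Ca_0$ differ only on the inclusion $\omega_\eps$ (recall $z=0\in\Dsf\setminus\overbar\Omega$, so $\omega_\eps$ is disjoint from $\Omega$), where $\Ca_\eps(x,\cdot)=a_1$ while $\Ca_0(x,\cdot)=a_2$. Hence the numerator reduces to
\ben
G(\eps,u_0,p_\eps)-G(\eps,u_0,p_0) = \int_{\omega_\eps}(a_1(\nabla u_0)-a_2(\nabla u_0))\cdot\nabla(p_\eps-p_0)\,dx.
\een
Dividing by $\ell(\eps)=|\omega|\eps^d$ and changing variables $x=T_\eps(\xi)=\eps\xi$, so that $\omega_\eps$ becomes $\omega$ and $\nabla(p_\eps-p_0)(\eps\xi)=\nabla Q_\eps(\xi)$ by the very definition of $Q_\eps$, the quotient becomes
\ben
\frac{1}{|\omega|}\int_{\omega}(a_1(\nabla u_0(x_\eps))-a_2(\nabla u_0(x_\eps)))\cdot\nabla Q_\eps\,dx.
\een

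Finally I would pass to the limit $\eps\searrow0$. Because $u_0\in C^{1,\alpha}(\overbar{B_\delta(z)})$ and $a_1,a_2$ are continuous, the coefficient $a_1(\nabla u_0(x_\eps))-a_2(\nabla u_0(x_\eps))$ converges to the constant vector $a_1(U_0)-a_2(U_0)$ uniformly on the bounded set $\omega$, hence strongly in $L_2(\omega)^d$. Combined with the weak convergence $\nabla Q_\eps\tow\nabla Q$ in $L_2(\VR^d)^d$, the product passes to the limit and yields $\frac{1}{|\omega|}\int_\omega(a_1(U_0)-a_2(U_0))\cdot\nabla Q\,dx=(a_1(U_0)-a_2(U_0))\cdot\fint_\omega\nabla Q\,dx$, which is \eqref{eq:R_term}. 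The one genuinely delicate point is precisely this weak-times-strong passage: $\nabla Q_\eps$ converges only weakly, so the limit would fail for a general integrand, but the strong (indeed uniform) convergence of the coefficient on the fixed domain $\omega$ rescues it. Everything else is routine bookkeeping with the change of variables and the affine structure of $G$.
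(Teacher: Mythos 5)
Your proposal is correct and follows essentially the same route as the paper: exploit the affine dependence of $G$ on $p$, subtract the unperturbed state equation tested with $\varphi = p_\eps - p_0$ to reduce the difference to an integral of $(a_1(\nabla u_0)-a_2(\nabla u_0))\cdot\nabla(p_\eps-p_0)$ over $\omega_\eps$, rescale via $T_\eps$ to bring in $\nabla Q_\eps$, and pass to the limit by pairing the uniformly (hence strongly in $L_2(\omega)^d$) convergent coefficient with the weakly convergent $\nabla Q_\eps$. Your explicit remark on why the weak-times-strong pairing is the delicate point is a fair elaboration of what the paper states in one line.
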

\begin{proof}
Testing \eqref{eq:state_per} for $\eps =0 $ with $\varphi :=  p_\eps -  p_0$ yields
\ben\label{eq:state_u0}
\int_{\Dsf} \Ca_0 (x, \nabla u_0 )  \cdot \nabla ( p_\eps -  p_0)  \; dx = \int_{\Dsf} f( p_\eps -  p_0) \;dx. 
\een
Therefore
\ben\label{eq:lagrange_difference}
\begin{split}
  G(\eps, u_0, p_\eps) - G(\eps, u_0, p_0) & = \int_{\Dsf}\Ca_\eps (x, \nabla u_0 ) \cdot \nabla ( p_\eps -  p_0) \; dx - \int_{\Dsf} f( p_\eps -  p_0) \;dx\\
                                     & \stackrel{\eqref{eq:state_u0}}{=} \int_{\Dsf}\left(\Ca_\eps (x, \nabla u_0 ) - \Ca_0 (x, \nabla u_0 ) \right)\cdot \nabla ( p_\eps -  p_0) \; dx \\
					 & = \int_{\omega_\eps} (a_1 (\nabla u_0 ) - a_2 (\nabla u_0 ) )   \cdot \nabla ( p_\eps -  p_0) \; dx.
\end{split}
\een
Therefore invoking the change of variables $T_\eps$ in \eqref{eq:lagrange_difference} leads to 
\ben
\frac{G(\eps, u, p_\eps) - G(\eps, u, p)}{|\omega_\eps|} = \frac{1}{|\omega|}\int_{\omega} (a_1 (\nabla u_0(x_\eps)  ) - a_2 (\nabla u_0(x_\eps) ) )  \cdot \nabla Q_\eps \; dx.
\een
In view of the continuity of $a_1,a_2, \nabla  u$ and the weak convergence $\nabla Q_\eps \tow \nabla Q$ in $L_2(\VR^d)^d$, we see that the right hand side converges to the expression \eqref{eq:R_term}.
\end{proof}

\subsection{The final expression of the topological expansion}
So we see that all conditions of Theorem~\ref{thm:diff_lagrange2} are satisfied and we 
have
\ben
dJ(\Omega)(z) = \partial_{\ell} G(0, u_0, p_0) + R( u_0, p_0),
\een
with $R(u_0, p_0)$ given by \eqref{eq:R_term}. We see that the second term on the right hand side 
still depends on $Q$, which we can express through $u_0$ and $p_0$ as follows. First 
we test \eqref{E:solution_Q} with $\psi := K$ and use the fundamental theorem to obtain  
 \begin{equation}
     \begin{split}
  &\int_{ \VR^d }  \left(\Ca_\omega(x, \nabla K +U_0 )-\Ca_\omega(x, U_0 ))\right)\cdot \nabla Q\;dx =   \\
  & - \int_{\VR^d} \int_0^1 \big(\partial_{u} \Ca_\omega(x, s\nabla K + U_0)(\nabla K)- \partial_{u} \Ca_\omega(x,U_0)(\nabla K)\big)\cdot P_0\;ds \;dx \\
         & - \int_\omega (\partial_{u} a_1(U_0) -  \partial_{u} a_2(U_0))( \nabla K)\cdot P_0\;dx - b \int_{\VR^d} |\nabla K|^2 \; dx 
     \end{split}
  \end{equation}
and testing \eqref{E:limit_K} with $\varphi = Q$ yields
\ben
    \int_{\VR^d}   (\Ca_\omega(x, \nabla K+U_0  ) - \Ca_\omega(x, U_0 ))\cdot \nabla Q \; dx 
                   = - \int_{\omega}(a_1 (U_0 ) - a_2 (U_0 )) \cdot \nabla Q \;dx.
\een

Combining these two equations we obtain
\ben
\begin{split}
    R( u, p) & =(a_1 (U_0 ) - a_2 (U_0) ) \cdot \fint_{\omega} \nabla Q \; dx \\
               & = \frac{1}{|\omega|} \int_{\VR^d} \int_0^1 \big(\partial_{u} \Ca_\omega(x, s\nabla K + U_0)(\nabla K)- \partial_{u} \Ca_\omega(x,U_0)(\nabla K)\big)\cdot P_0\;ds \;dx \\
         & + \frac{1}{|\omega|}\int_\omega (\partial_{u} a_1(U_0) -  \partial_{u} a_2(U_0))( \nabla K)\cdot P_0\;dx + \frac{1}{|\omega|} b \int_{\VR^d} |\nabla K|^2 \; dx.
\end{split}
\een
 In particular we see that for $a=1$ and $b=0$ we retrieve the formula \eqref{eq:top_formula}, that is, $R_1(u_0,p_0)+R_2(u_0,p_0) = R(u_0,p_0)$.

\section*{Conclusion}
In this paper we derived topological sensitivities for a class of quasi-linear problems under more general assumptions than previous results. Moreover, we simplified many of the previous calculations, which can be helpful when dealing with other types of nonlinear problems. In fact our analysis of $K_\eps \to K$ is not restricted to elliptic problems and is extendable to other types of equations, such as Maxwell's equation, see \cite{GanglSturm2019}.

\bibliography{topological_3D}
\bibliographystyle{plain}
\end{document}